 \newcommand{\eps}{\varepsilon}
\begin{document}

\theoremstyle{plain}
\newtheorem{Lemma}{Lemma}
\newtheorem{Theorem}{Theorem}
\newtheorem{Proposition}{Proposition}
\newtheorem{Corollary}{Corollary}
\newtheorem{Conjecture}{Conjecture}
\newtheorem{Problem}{Problem}
\newtheorem*{Problem*}{Problem}
\newtheorem*{Openproblem*}{Open problem}
\newtheorem{Remark}{Remark}

\title{The distance problem on 
measured metric spaces} 
\author{David J. Aldous\thanks{Department of Statistics,
 367 Evans Hall \#\  3860,
 U.C. Berkeley CA 94720; \href{mailto:aldousdj@berkeley.edu}{aldousdj@berkeley.edu};
\href{https://www.stat.berkeley.edu/users/aldous/}{https://www.stat.berkeley.edu/users/aldous/}.}
  \and 
Guillaume Blanc\thanks{Universit\'e Paris-Saclay; \href{mailto:guillaume.blanc1@universite-paris-saclay.fr}{guillaume.blanc1@universite-paris-saclay.fr}; \href{https://sites.google.com/view/guillaume-blanc-math}{https://sites.google.com/view/guillaume-blanc-math}}
  \and
Nicolas Curien\thanks{Universit\'e Paris-Saclay; \href{mailto:nicolas.curien@gmail.com}{nicolas.curien@gmail.com}; \href{https://www.imo.universite-paris-saclay.fr/~nicolas.curien/}{https://www.imo.universite-paris-saclay.fr/~nicolas.curien/}}
}


\maketitle
 
\begin{abstract}
What distributions arise as the distribution of the distance between two typical points in some measured metric space?
This seems to be a surprisingly subtle problem.
We conjecture that every distribution with a density function 
whose support contains $0$ does arise in this way,
and give some partial results in that direction.
\end{abstract}
 
\section{Introduction and main results}

The problem below has apparently not previously been studied.
It seems quite natural in itself, and some statistical motivation is given towards the end of the introduction. 
Let $\mu$ be a Borel probability measure on a complete \textbf{separable} metric space $(S,d)$, and let $\xi_1$ and $\xi_2$ be independent random variables with distribution $\mu$.
The random variable $D:=d(\xi_1,\xi_2)$ has some distribution, say $\theta$, on $\mathbb{R}_+$.
We call a distribution on $\mathbb{R}_+$ \textbf{feasible} if it arises in this way, and we say that $(S,d,\mu)$ \textbf{achieves} $\theta$.

\begin{Problem}\label{prob:main}
Describe the set of feasible distributions.
\end{Problem}

The results stated below are essentially all we know about this problem.
Before coming to them, let us stress that the separability requirement is fundamental.
For example, it guarantees that $D$ is a random variable, since in this setting the Borel $\sigma$-algebra $\mathcal{B}(S\times S)$ agrees with $\mathcal{B}(S)\otimes\mathcal{B}(S)$.
Besides, the construction of interesting Borel measures on non-separable metric spaces is problematic: for instance, it is known that $(S,d)$ must be separable as soon as it supports a boundedly finite measure $\mu$ of full support \cite[Theorem 4.1]{gaczkowskigorka}.


In our problem, a first consequence of the separability assumption is that $\theta$ must have $0$ in its support (Proposition \ref{prop:0supp}).
But Proposition \ref{prop:mathoverflow} will show that the converse is false: there are distributions on $\mathbb{R}_+$ with $0$ in their support that are not feasible. 

\begin{Proposition}\label{prop:0supp}
Any feasible distribution must have $0$ in its support.
\end{Proposition}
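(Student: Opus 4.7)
The plan is to show directly from the definition of support that for every $\eps > 0$, one has $\theta([0,\eps)) > 0$. Equivalently, I will produce an event of positive probability on which $d(\xi_1, \xi_2) < \eps$.

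Fix $\eps > 0$. The key input is separability of $(S,d)$: choose a countable dense subset $\{x_n\}_{n \geq 1} \subseteq S$, so that
\[
S \;=\; \bigcup_{n \geq 1} B\bigl(x_n, \tfrac{\eps}{2}\bigr),
\]
where $B(x, r)$ denotes the open ball of radius $r$ about $x$. Since this is a countable cover of the probability space $(S,\mu)$, countable subadditivity forces at least one of the balls, say $B^{\star} := B(x_{n_0}, \eps/2)$, to satisfy $\mu(B^{\star}) > 0$.

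Now, by independence of $\xi_1$ and $\xi_2$,
\[
\Pr\bigl(\xi_1 \in B^{\star},\ \xi_2 \in B^{\star}\bigr) \;=\; \mu(B^{\star})^2 \;>\; 0,
\]
and on this event the triangle inequality yields $d(\xi_1, \xi_2) \leq d(\xi_1, x_{n_0}) + d(x_{n_0}, \xi_2) < \eps$. Hence $\theta([0,\eps)) = \Pr(D < \eps) \geq \mu(B^{\star})^2 > 0$. Since $\eps > 0$ was arbitrary, $0$ belongs to the support of $\theta$.

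There is no substantial obstacle: the argument is essentially a one-line consequence of separability, and the only thing one must be careful about is to pick the ball radius to be $\eps/2$ rather than $\eps$ so that the triangle inequality gives the desired bound. Measurability of the event $\{\xi_1 \in B^{\star}, \xi_2 \in B^{\star}\}$ is immediate from the product structure; measurability of $\{D < \eps\}$ itself relies on the remark already made in the introduction, that separability ensures $\mathcal{B}(S \times S) = \mathcal{B}(S) \otimes \mathcal{B}(S)$, but this is not needed here since the event considered is a product.
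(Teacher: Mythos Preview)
Your proof is correct and is essentially identical to the paper's own argument: pick a countable dense set, cover $S$ by balls of radius $\eps/2$, use countable additivity to find one of positive $\mu$-measure, and conclude via the triangle inequality that $\Pr(D<\eps)\ge\mu(B^\star)^2>0$. The only cosmetic difference is that you write the conclusion with a strict inequality on an open ball while the paper uses $\Pr(D\le\eps)$, which is immaterial.
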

Informally, in nice homogeneous settings, we expect that 
\[\theta[0,\varepsilon]=\mathbb{P}(D \leq \varepsilon) \approx  \varepsilon^{ \mathrm{dim}}\quad\text{as $\varepsilon\to0^+$,}\]
where $ \mathrm{dim}$ is the ``dimension'' of $(S,d)$. 
See Remark \ref{rek:dim} for a precise statement in this direction.
This indicates that if $\theta$ assigns small mass around $0$, then $(S,d)$ must be ``large'', and indeed Proposition \ref{prop:0supp} is a caricature of this fact: if $\theta$ assigns no mass around $0$, then $(S,d)$ must be so large that it cannot be separable. 
There is however no obstruction in general:

\begin{Proposition}\label{prop:petit0}
For every non-decreasing function $F:\mathbb{R}_+^*\rightarrow{]0,1]}$, it is possible to construct a compact measured metric space $(S,d,\mu)$ such that $\mathbb{P}(D\leq\varepsilon)\leq F(\varepsilon)$ for all sufficiently small $\varepsilon$.
\end{Proposition}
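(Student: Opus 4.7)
The plan is to take $(S,d)$ as the Cantor space $\{0,1\}^{\mathbb{N}}$ equipped with a tailor-made ultrametric, and $\mu$ as the uniform product measure. The ultrametric structure makes $\mathbb{P}(D\leq\varepsilon)$ piecewise constant and trivial to compute, while compactness comes for free.

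Fix a strictly decreasing sequence $r_0 > r_1 > \cdots > 0$ with $r_n \to 0$. For $x \neq y$ in $S$ set $d(x,y) := r_{n(x,y)}$ where $n(x,y) := \min\{k \geq 0 : x_k \neq y_k\}$, and set $d(x,x) = 0$. Then $d$ is an ultrametric; the induced topology is the product topology (precisely because $r_n \to 0$), so $(S,d)$ is compact. Under the product $\mu$ of uniform laws on $\{0,1\}$, one has $\mathbb{P}(n(\xi_1,\xi_2) \geq k) = 2^{-k}$, and consequently, for $\varepsilon \in [r_n, r_{n-1})$,
\[
\mathbb{P}(D \leq \varepsilon) \;=\; \mathbb{P}(n(\xi_1,\xi_2) \geq n) \;=\; 2^{-n}.
\]

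It then suffices to choose $(r_n)$ so that $2^{-n} \leq F(r_n)$ for $n$ large; combined with monotonicity of $F$, this yields $\mathbb{P}(D \leq \varepsilon) \leq F(\varepsilon)$ on each $[r_n, r_{n-1})$, hence for all sufficiently small $\varepsilon$. Let $a_n := \inf\{\varepsilon > 0 : F(\varepsilon) \geq 2^{-n}\}$; this is finite for $n$ large, non-increasing, and---crucially---satisfies $a_n \to 0$, since otherwise $F$ would vanish on some interval $(0,a)$, contradicting the hypothesis $F > 0$. Taking $r_n := a_n + 1/n$ for $n$ large then gives a strictly decreasing null sequence satisfying $F(r_n) \geq 2^{-n}$ by construction. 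The only point that is not purely routine is the verification that $a_n \to 0$, which rests squarely on $F$ being strictly positive on $(0,\infty)$---a hypothesis explicitly built into the statement; everything else (triangle inequality for $d$, identification of topologies, the geometric law of $n(\xi_1,\xi_2)$) is immediate.
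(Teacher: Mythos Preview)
Your argument is correct. The ultrametric on $\{0,1\}^{\mathbb{N}}$ with the uniform product measure gives a clean and elementary proof: the distribution of $D$ is purely atomic, supported on $\{r_n : n \geq 0\}$, and the tail probabilities $\mathbb{P}(D \leq \varepsilon)$ are exactly the dyadic numbers $2^{-n}$, which you then force below $F$ by choosing the scale sequence $(r_n)$ appropriately. One cosmetic point: you define $r_n := a_n + 1/n$ only for $n$ large, so you should fill in the initial terms of $(r_n)$ arbitrarily (any strictly decreasing values above $r_N$ will do); this is harmless because the conclusion is only claimed for small $\varepsilon$.

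The paper takes a genuinely different route. It builds a spherically symmetric rooted tree in which each vertex at height $n$ has $2\kappa_n$ children, with edge-lengths alternating between $2^{-(n+1)}$ and $2\cdot 2^{-(n+1)}$, and puts the natural boundary measure on the space of rays. The point of this more elaborate construction is that $D$ then has a \emph{continuous density} $\Psi$ (Lemma~\ref{L:xixi}), with explicit control of $\Psi$ on each dyadic shell in terms of the parameters $\kappa_n$. This extra regularity is not needed for Proposition~\ref{prop:petit0} itself, but it is exactly what the paper exploits in Lemma~\ref{lem:?} and the proof of Proposition~\ref{prop:?}, where one needs a feasible distribution whose density is dominated near $0$ by a prescribed continuous function. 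Your construction, by contrast, produces an atomic law for $D$ and so would not feed into those later arguments---but as a proof of Proposition~\ref{prop:petit0} on its own, it is shorter and entirely self-contained.
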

Our notation here is
\[ \mathbb{R}_+ := [0,\infty[ \quad \ \mathbb{R}_+^* := ]0,\infty[ \quad \mathbb{N} := \{0,1,2, \ldots \} \quad \ \mathbb{N}^* := \{ 1,2, \ldots \}. \]

Moving away from the behavior at $0^+$, a common first thought on the problem is that the triangle inequality puts some constraint on the general structure of a feasible $\theta$. 
Perhaps surprisingly, we first show that any distribution on $\mathbb{R}_+$ whose support is a finite set containing $0$ is feasible:

\begin{Theorem}\label{thm:finite}
For every distribution of the form $\theta=p_0\cdot\delta_0+p_1\cdot\delta_{d_1}+\ldots+p_k\cdot\delta_{d_k}$, where $k\in\mathbb{N}$ and ${0=d_0<d_1<\ldots<d_k}$, and where $p_0,p_1,\ldots,p_k\in{]0,1[}$ are such that ${p_0+p_1+\ldots+p_k=1}$, there exists a compact measured metric space $(S,d,\mu)$ that achieves $\theta$.
\end{Theorem}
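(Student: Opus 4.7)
I propose to argue by induction on $k$, using a gluing construction that introduces the largest distance $d_k$ at the outer level and recurses on a distribution with smaller support. The recursion will bottom out at $\theta=\delta_0$, achieved by a singleton.

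The core construction is the following. Given a finite measured metric space $(S_0,d_0,\mu_0)$ that achieves some distribution $\theta_0$ with all pairwise distances in $\{0,d_1,\ldots,d_{k-1}\}$, take $N$ disjoint copies $S_0^{(1)},\ldots,S_0^{(N)}$, let $S:=S_0^{(1)}\sqcup\cdots\sqcup S_0^{(N)}$, extend the internal metric by declaring $d(x,y):=d_k$ whenever $x,y$ lie in different copies, and equip $S$ with $\mu:=\sum_{j=1}^{N}\alpha_j\,\mu_0^{(j)}$ for positive weights $\alpha_j$ summing to $1$. The triangle inequality in $(S,d)$ is immediate since every distance is either internal (at most $d_{k-1}$) or equals $d_k$, and any triangle mixing the two types has two sides equal to $d_k$ and a third side bounded by $d_{k-1}<d_k$. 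Conditioning on the cluster locations of $\xi_1,\xi_2$, the distribution of $D$ is
\[
\Bigl(\sum_{j=1}^{N}\alpha_j^{2}\Bigr)\theta_0 \;+\; \Bigl(1-\sum_{j=1}^{N}\alpha_j^{2}\Bigr)\delta_{d_k}.
\]

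To match the target $\theta=p_0\delta_0+\cdots+p_k\delta_{d_k}$, I will take $\theta_0:=\sum_{i=0}^{k-1}\tfrac{p_i}{1-p_k}\,\delta_{d_i}$, which is a valid probability measure on $\{0,d_1,\ldots,d_{k-1}\}$ since $p_k<1$; its atoms all lie in $(0,1)$ when $k\geq 2$, and it reduces to $\delta_0$ (the base case) when $k=1$. I then need to choose weights with collision probability $\sum\alpha_j^{2}=1-p_k$. This is the mildly delicate point: I fix $N$ large enough that $1/N\leq 1-p_k$ and set $\alpha_1=\cdots=\alpha_{N-1}=t$, $\alpha_N=1-(N-1)t$; as $t$ varies in $[0,1/N]$, the quantity $\sum\alpha_j^{2}$ decreases continuously from $1$ to $1/N$, and the intermediate value theorem supplies a valid $t$. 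Compactness of the final space is automatic since it is finite.

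The main conceptual point, I expect, is simply to strengthen the inductive hypothesis to carry the structural information that $(S_0,d_0)$ is a finite metric space with all distances in $\{0,d_1,\ldots,d_{k-1}\}$ --- it is this that makes both the gluing step and the triangle-inequality verification painless. With this strengthening in hand, every step is a routine computation.
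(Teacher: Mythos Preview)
Your proposal is correct and is essentially the paper's own argument: induction on $k$, peeling off the largest atom, taking weighted copies with collision probability $\sum_j \alpha_j^{2}=1-p_k$, and an intermediate-value argument to hit that value. The only difference is presentational --- the paper encodes the same gluing as a rooted tree with edge-lengths (leaves at height $d_k/2$), a formalism it reuses in later constructions, whereas you declare the cross-cluster distance $d_k$ directly; the resulting metric on the support of $\mu$ is the same.
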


Our proof of Theorem \ref{thm:finite} relies on a tree construction.
Although it might be appealing to approximate a target distribution on $\mathbb{R}_+$ by finitely supported distributions $\theta_n$, each of which is achieved by some $(S_n,d_n,\mu_n)$, and hope to get something in the limit $n\to\infty$, we discuss in Remark \ref{rem:limit} why this is not possible in general. 
On the other hand, the tree constructions of Theorem \ref{thm:finite} and Proposition \ref{prop:petit0} can be used to prove the following result:

\begin{Proposition}\label{prop:?}
Let $f$ be a continuous probability density function on $\mathbb{R}_+$, and assume that there exists $\eta>0$ such that $f(x)>0$ for all $x\in{]0,\eta]}$.
Then, for every $\zeta>0$, there exists a compact measured metric space $(S,d,\mu)$ such that the distribution $\theta$ arising from $(S,d,\mu)$ has a continuous density $g$ on $\mathbb{R}_+$, where $g\leq(1+\zeta)\cdot f$.
\end{Proposition}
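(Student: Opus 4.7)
I would approximate $f$ at scale $\delta$ by a finitely supported atomic distribution $\theta_0$ on a grid $\{d_n\}$, realize $\theta_0$ by a compact space via Theorem \ref{thm:finite}, and then smear the atoms into a continuous density by taking a product with a small ``smearing'' space of diameter $\delta$, invoking Proposition \ref{prop:petit0} to control the distribution of distances very close to $0$.

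\textbf{Discretization.} Fix a small $\delta>0$ and a large $N$ (both to be tuned in terms of $\zeta$ and the modulus of continuity of $f$). Set $d_n=n\delta$ and $p_n=\int_{d_n}^{d_{n+1}}f$ for $1\le n<N$, $p_N=\int_{N\delta}^\infty f$, and $p_0=\int_0^\delta f$, so that $\sum_{n=0}^N p_n=1$. By uniform continuity of $f$ on compact sets where $f$ is bounded below, for $\delta$ small enough and every $n\ge1$ one has $p_n/\delta \le (1+\zeta/2)\,f(t)$ for every $t\in[d_n,d_{n+1}]$. Apply Theorem \ref{thm:finite} to obtain a compact measured metric space $(S_0,d_0,\mu_0)$ achieving $\theta_0=\sum_n p_n\,\delta_{d_n}$.

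\textbf{Smearing.} Take $(S,d,\mu)=(S_0\times B,\,d_0+d_B,\,\mu_0\otimes\nu_B)$, where $(B,d_B,\nu_B)$ is a compact measured metric space of diameter $\delta$ whose distance distribution $\theta_B$ has a continuous density $h_B$ supported on $[0,\delta]$. Then $\theta=\theta_0*\theta_B$ has continuous density $g(t)=\sum_n p_n\,h_B(t-d_n)$, and on each $[d_n,d_{n+1}]$ (disjoint bumps) this reduces to $g(t)=p_n\,h_B(t-d_n)$. The pointwise bound $g\le(1+\zeta)f$ decouples into constraints on $h_B$: for $n\ge1$ it suffices that $h_B\lesssim (1+\zeta/2)/\delta$, via the discretization estimate; for $n=0$ one needs $h_B(s)\le(1+\zeta)f(s)/p_0$ on $[0,\delta]$, which forces $h_B$ to vanish at $0$ at the rate at which $f$ does. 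Both constraints can be made consistent with $\int_0^\delta h_B=1$ because $p_0=\int_0^\delta f$ and the slack $\zeta$ gives the needed room.

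\textbf{Construction of $B$ and main obstacle.} The space $B$ can be built by combining the tree construction of Theorem \ref{thm:finite} (to realize an atomic approximation of the target shape of $h_B$ at scale $\delta$) with the construction of Proposition \ref{prop:petit0} (to make the mass of $\theta_B$ at distances $\ll \delta$ as small as required to match the vanishing of $f$ near $0$). The main technical hurdle is precisely this construction: Proposition \ref{prop:petit0} yields only a CDF-level bound $\mathbb{P}(D_B\le\varepsilon)\le F(\varepsilon)$, whereas what is needed is a pointwise bound on a continuous density of $\theta_B$. In effect this subproblem is a small-scale version of the proposition being proved, but because the target shape of $h_B$ has to be matched only up to a multiplicative slack, a finite refinement of the atomic tree (with edge lengths and leaf masses hand-tuned so that the first-level convolution already produces the required vanishing at $0$) should suffice and avoid any infinite regress. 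Once $B$ is constructed, verifying $g\le(1+\zeta)f$ on each of the slabs $[d_n,d_{n+1}]$ is a direct continuity estimate, and the resulting $(S,d,\mu)$ is compact as a product of two compact spaces.
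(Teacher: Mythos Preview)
Your high-level plan (discretize $f$, realize the atomic approximation by a tree, then smear each atom into a small bump) is exactly the paper's strategy. The genuine gap is in the smearing step: you use a \emph{product} $S_0\times B$ with the $\ell^1$ metric, whereas the paper \emph{grafts} a pointed copy of its smearing space onto each leaf of the tree. This is not a cosmetic difference.

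In the product construction the smearing density $h_B$ is the same for every atom, so it must simultaneously satisfy
\[
h_B(s)\;\lesssim\;(1+\zeta)/\delta\quad\text{(from the }n\ge1\text{ slabs)}\qquad\text{and}\qquad
h_B(s)\;\le\;(1+\zeta)\,f(s)/p_0\quad\text{(from the }n=0\text{ slab)}.
\]
When $f(0)=0$ these are incompatible with $\int_0^\delta h_B=1$. For a concrete instance take $f(s)=2s$ on $[0,1]$: then $p_0=\delta^2$ and the pointwise envelope is $(1+\zeta)\delta^{-1}\min(1,2s/\delta)$, whose integral over $[0,\delta]$ equals $\tfrac34(1+\zeta)<1$ for small $\zeta$. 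So no probability density $h_B$ can fit under the envelope, and the product approach collapses precisely in the case the proposition is meant to cover. (There is a second, smaller issue: if $f$ has an isolated zero at some $t_0>\eta$, your grid may produce an atom whose bump is forced to vanish at an interior point; the paper avoids this by placing atoms only where $f\ge\beta/K$.)

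The paper escapes the incompatibility by exploiting a \emph{pointed} smearing space $(X,\pi,x_0)$ with two separately controllable features: the law of $d(x_0,\xi)$ is uniform on $[1,2]$, so after grafting the inter-leaf distances are smeared by the triangular density $\phi$ of $U_1+U_2$, which is bounded and needs no decay at $0$; the intra-leaf distances carry the density $\Psi$ of $d(\xi_1,\xi_2)$, and by tuning the branching parameters one forces $\Psi(x)=o(f(\varepsilon x))$ near $0$. Crucially, $\Psi$ enters the final density only with the weight $p_0$, and $p_0$ is a free parameter, so $p_0\Psi\le f$ near $0$ is arranged by choosing $p_0$ small---there is no competing uniform-type constraint on $\Psi$. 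Your proposal conflates these two roles into a single $h_B$, which is why the ``small-scale version of the proposition'' you identify is not merely a technical hurdle but a genuine circularity: a finite atomic refinement cannot produce a continuous density, and any feasible continuous density on $[0,\delta]$ with the required decay at $0$ will have sup norm well above $(1+\zeta)/\delta$.
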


A weaker result is that, for such $f$, there exist feasible densities $g_n$ such that $\int |g_n(x) - g(x)| dx \to 0$.
This weaker form could be derived quite easily from Theorem  \ref{thm:finite}, by replacing each leaf of the tree by a short interval. 
The specific stronger form in Proposition \ref{prop:?} requires a more elaborate construction.
The form is motivated by the following observation: 
If there were a space $(S^\prime,d,\mu^\prime)$ achieving the distribution $\theta$ with density $f$,
 then take a compact set $K$ with $\mu^\prime(K) \ge 1 - \eps$, and then 
 the {\bf compact} space $ (K,d, \mu^\prime(\cdot | K))$ achieves a distribution $\theta^*$ 
 with $\theta^* \le (1-\eps)^{-2} \theta$.


A straightforward corollary of Proposition \ref{prop:?} is that every distribution with a suitable density is achieved by some \textbf{random} compact measured metric space (Corollary \ref{cor:density}).
In this setting, we first sample the random compact measured metric space $(S,d,\mu)$, then conditionally on $(S,d,\mu)$, we sample two independent random variables $\xi_1$ and $\xi_2$ with distribution $\mu$, and we consider the annealed distribution of $d(\xi_1,\xi_2)$.
See Section \ref{sec:csq} for details.

\begin{Corollary}\label{cor:density}
Let $f$ be a continuous probability density function on $\mathbb{R}_+$, and assume that there exists $\eta>0$ such that $f(x)>0$ for all $x\in{]0,\eta]}$.
Then, there exists a \textbf{random} compact measured metric space $(S,d,\mu)$ that achieves the distribution with density $f$.
\end{Corollary}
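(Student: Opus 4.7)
The plan is to realize the random space as a countable mixture of deterministic compact measured metric spaces produced by iterating Proposition \ref{prop:?}. Concretely, I will build compact m.m.s.\ $(S_n,d_n,\mu_n)$ with distance densities $g_n$, and probabilities $p_n\geq 0$ summing to $1$, such that $f=\sum_{n\geq 1}p_n g_n$ almost everywhere on $\mathbb{R}_+$. Setting the random space equal to $(S_n,d_n,\mu_n)$ with probability $p_n$ then defines a discrete law on the Polish space of isometry classes of compact measured metric spaces (under the Gromov--Hausdorff--Prokhorov topology), whose annealed distance distribution has density $\sum_n p_n g_n=f$, as required.

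The densities are produced by a telescoping construction. Set $h_0:=f$, continuous and positive on $]0,\eta]$ by hypothesis. Inductively, assuming $h_{n-1}$ is a continuous probability density positive on $]0,\eta]$, apply Proposition \ref{prop:?} to $h_{n-1}$ with $\zeta=1$ to obtain a compact m.m.s.\ $(S_n,d_n,\mu_n)$ whose distance density satisfies $g_n\leq 2h_{n-1}$ on $\mathbb{R}_+$. Define
\[h_n := \tfrac{3}{2}\bigl(h_{n-1}-\tfrac{1}{3}g_n\bigr).\]
Then $h_n$ is continuous and integrates to $1$, and since $\tfrac{1}{3}g_n\leq \tfrac{2}{3}h_{n-1}$ pointwise we have $h_n\geq h_{n-1}/2$; iterating, $h_n\geq f/2^n>0$ on $]0,\eta]$, so the induction can be continued.

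Rewriting as $h_{n-1}=\tfrac{1}{3}g_n+\tfrac{2}{3}h_n$ and iterating yields, for every $N\geq 1$,
\[f=\sum_{n=1}^{N}p_n g_n+\bigl(\tfrac{2}{3}\bigr)^N h_N,\qquad p_n:=\tfrac{1}{3}\bigl(\tfrac{2}{3}\bigr)^{n-1}.\]
The $p_n$ are nonnegative and sum to $1$, the remainder satisfies $\int(\tfrac{2}{3})^N h_N\,dx=(\tfrac{2}{3})^N\to 0$, and the partial sums $\sum_{n\leq N}p_n g_n$ are monotone in $N$ and bounded above by $f$; monotone convergence therefore gives $\sum_{n\geq 1}p_n g_n=f$ almost everywhere on $\mathbb{R}_+$. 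I expect the main subtlety to lie in keeping the hypotheses of Proposition \ref{prop:?} alive throughout the induction, that is, in preserving the strict positivity of $h_n$ on a \emph{fixed} neighborhood of $0$; the choice of weight $\tfrac{1}{3}$ (rather than the naive $\tfrac{1}{2}$, which would only guarantee $h_n\geq 0$) is made precisely to secure the quantitative bound $h_n\geq f/2^n$ and thereby propagate positivity through all steps.
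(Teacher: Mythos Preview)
Your proof is correct and follows essentially the same approach as the paper: iterate Proposition~\ref{prop:?} to peel off feasible densities and express $f$ as a countable convex combination $\sum_n p_n g_n$, then realize the random space as the corresponding mixture of the $(S_n,d_n,\mu_n)$. The only difference is cosmetic parameter tuning: the paper applies Proposition~\ref{prop:?} with $\zeta=\tfrac{1}{2}$ (so $g_{n+1}\le \tfrac{3}{2}f_n$) and splits $f_n=\tfrac{1}{2}g_{n+1}+\tfrac{1}{2}f_{n+1}$, obtaining weights $2^{-n}$, while you use $\zeta=1$ and split $h_{n-1}=\tfrac{1}{3}g_n+\tfrac{2}{3}h_n$, obtaining weights $\tfrac{1}{3}\bigl(\tfrac{2}{3}\bigr)^{n-1}$; both choices yield the same lower bound $h_n\ge h_{n-1}/2$ that keeps positivity on $]0,\eta]$ alive throughout the induction.
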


We highlight that the following problem remains open.

\begin{Openproblem*}
Prove that for every probability density function $f$ on $\mathbb{R}_+$ whose support contains $0$, the distribution with density $f$ is feasible.
\end{Openproblem*}

\paragraph{Background and motivation.} 
We finish this introduction with some statistical motivation for the problem. Consider a Borel probability measure $\mu$ on a complete separable metric space $(S,d)$, and let $\xi_1,\xi_2,\ldots$ be a sequence of independent random variables with distribution $\mu$.
In this setting, it is known that the distribution of the infinite array $A_\infty=(d(\xi_i,\xi_j)\,;\,i,j\geq1)$ determines $(S,d,\mu)$ up to measure-preserving isometry (we refer to \cite{vershik,vershik2}, see also \cite[Chapter 3$\frac{1}{2}_+$]{gromov}), although there is no known explicit characterization\footnote{This setting contrasts with the case where $d$ ranges over all {\em measurable} functions, in which case the analogous infinite arrays are characterized by exchangeability properties: see \cite{PE1,PE2,PE3} and a more recent nice account  by Tim Austin \cite{austin}.}  of the possible distributions of $A_\infty$. See Remark \ref{rem:limit} for the connections with Theorem \ref{thm:finite}.
For each $n \geq2$, the distribution  of 
\begin{equation}
A_n := (d(\xi_i,\xi_j)\,;\,1 \le i,j  \le n),
\label{An}
\end{equation}
is usually called the \textbf{$n$-point function/distribution}. Describing the possible distributions of $A_{\infty}$ is equivalent to understanding, for each $n\geq2$, what constraints on the $(n+1)$-point function are given by the $n$-point function.
The fact that we do not know which distributions are $2$-point functions (Problem \ref{prob:main}) makes it hard to proceed, and  our work  is a modest start.
After having described the set of feasible distributions, one could continue to ask for the possible $3$-point functions, where now the triangle inequality would come into play...
 
Now, here is the statistical modelling context.
Suppose that we have a large database of different objects of the same type\footnote{E.g, fingerprints, human DNA (in the forensic context), facial recognition, musical tunes or lyrics (there are of the order of 100 million songs online: \href{https://www.musicianwave.com/how-many-songs-are-there-in-the-world/}{https://www.musicianwave.com/how-many-songs-are-there-in-the-world/}),  the plot of your new murder mystery novel (in the copyright context)...}, and we want to decide whether a new object is significantly similar to some object in the database -- more similar than would be expected ``by chance".  A natural model in this general context is that there is a space $(S,d)$ of possible objects with distances, and that our database objects and the new object are i.i.d. samples $(\xi_i,\,i \geq 1)$ from a probability measure $\mu$ on $S$. However, we do not observe $S$ or $\mu$, all we observe are the distances $A_{n}= (d( \xi_{i}, \xi_{j})\,;\,1 \leq i,j \leq n)$ between these objects.
One would like to devise an algorithm that, given $(d(\xi_{n+1},\xi_i)\,;\,1\leq i \leq n)$, decides whether $\xi_{n+1}$ is ``too close to one of $\{ \xi_{1}, ... , \xi_{n}\}$ to just be chance", which would then suggest some causal relationship. 
In this context, we seek to make inferences which are ``universal'', i.e, do not depend on $(S, d, \mu)$, and this motivates the study of relationships between $n$-point functions.

\section{Properties of the support of feasible distributions}

As mentioned in the introduction, a first consequence of the separability assumption is that any feasible distribution must have $0$ in its support (Proposition \ref{prop:0supp}).
Let us prove this now.

\begin{proof}[Proof of Proposition \ref{prop:0supp}]
Let $\{s_n,\,n\geq1\}$ be a dense countable subset of $S$.
For each $\varepsilon>0$, since the balls $(B(s_n,\varepsilon/2)\,;\,n\geq1)$ cover $S$, there exists $n\in\mathbb{N}^*$ such that $\mu(B(s_n,\varepsilon/2))>0$.
It follows that 
\[\mathbb{P}(D\leq\varepsilon)\geq\mu(B(s_n,\varepsilon/2))^2>0.\]
\end{proof}

\begin{Remark} \label{rek:dim}
If $(S,d)$ is compact, then the previous argument can be sharpened to show that
\begin{equation}\label{eq:minkowski}
\mathbb{P}(D\leq\varepsilon)\geq M(\varepsilon/2)^{-1},
\end{equation}
where $M(\varepsilon/2)$ is the minimal number of balls with radius $\varepsilon/2$ needed to cover $S$.
Indeed, fix a covering $(B(s_i,\varepsilon/2)\,;\,i\in\llbracket1,k\rrbracket)$ of $S$ by 
balls\footnote{Notation $\llbracket 1,k  \rrbracket$ indicates the interval of integers.}
 of radius $\varepsilon/2$.
Then, let
\[B_i=B(s_i,\varepsilon/2)\setminus(B(s_1,\varepsilon/2)\cup\ldots\cup B(s_{i-1},\varepsilon/2))\quad\text{for all $i\in\llbracket1,k\rrbracket$,}\]
so that $B_1\cup\ldots\cup B_k=S$, where the $B_i$ have diameter at most $\varepsilon$.
On the one hand, we have
\[\mathbb{P}(D\leq\varepsilon)\geq\mathbb{P}\left(\bigsqcup_{i=1}^k(\xi_1,\xi_2\in B_i)\right)=\sum_{i=1}^k\mu(B_i)^2.\]
On the other hand, using the Cauchy--Schwarz inequality, we have
\[1=\mu\left(\bigsqcup_{i=1}^kB_i\right)^2=\left(\sum_{i=1}^k\mu(B_i)\right)^2\leq k\cdot\sum_{i=1}^k\mu(B_i)^2.\]
Thus, we get
\[\mathbb{P}(D\leq\varepsilon)\geq k^{-1}.\]
Taking $k$ to be minimal proves \eqref{eq:minkowski}.
\end{Remark}

The next example\footnote{See {\em Acknowledgements} for the origin of this example.} shows that having $0$ in its support is not a {\em sufficient} condition for a distribution on $\mathbb{R}_+$ to be feasible.

\begin{Proposition}\label{prop:mathoverflow}
Any distribution of the form $\theta=p\cdot\delta_0+(1-p)\cdot\theta'$, where $p\in{]0,1[}$, and where $\theta'$ is a non-atomic distribution on $\mathbb{R}_+$ whose support does not contain $0$, is not feasible.
\end{Proposition}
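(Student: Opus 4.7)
My plan is to suppose for contradiction that some $(S,d,\mu)$ achieves such a $\theta$, decompose $\mu$ into its atomic and non-atomic parts, and derive separate contradictions from the two features of $\theta$: the atom $p\cdot\delta_0$ at $0$, and the gap $(0,\delta)$ (for some $\delta>0$) on which $\theta$ vanishes.

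First I would pin down the atomic part via the identity
\[
\mathbb{P}(\xi_1=\xi_2) \;=\; \int_S \mu(\{x\})\,d\mu(x) \;=\; \sum_i m_i^2,
\]
where $(m_i)_{i\geq 1}$ lists the atom masses of $\mu$. Since this equals $\theta(\{0\})=p>0$, $\mu$ must have at least one atom. If it had two distinct atoms $x\neq y$, then $r:=d(x,y)>0$ and $\theta(\{r\})\geq 2\mu(\{x\})\mu(\{y\})>0$ would be an atom of $\theta$ at a positive value, violating the non-atomicity of $\theta'$. Hence $\mu = \sqrt{p}\,\delta_{x_0}+\mu_c$ for some unique atom $x_0$ and a non-atomic remainder $\mu_c$ of total mass $1-\sqrt{p}>0$.

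Next I would rule out the existence of this $\mu_c$. Fix $\delta>0$ with $\operatorname{supp}(\theta')\subset[\delta,\infty[$, so $\mathbb{P}(0<D<\delta)=0$, and apply Fubini to the Borel set $\{(x,y)\in S\times S : 0<d(x,y)<\delta\}$ to obtain
\[
\mu\bigl(B(x,\delta)\setminus\{x\}\bigr)=0 \qquad\text{for $\mu$-almost every }x\in S.
\]
But for any $x\in\operatorname{supp}(\mu_c)$, the definition of the support gives $\mu_c(B(x,\delta))>0$ while non-atomicity of $\mu_c$ gives $\mu_c(\{x\})=0$, so $\mu(B(x,\delta)\setminus\{x\})\geq\mu_c(B(x,\delta))>0$. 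Thus $\operatorname{supp}(\mu_c)$ lies inside a $\mu$-null set, contradicting $\mu(\operatorname{supp}(\mu_c))\geq\mu_c(\operatorname{supp}(\mu_c))=1-\sqrt{p}>0$ (the last equality uses separability of $S$). The Fubini step is the crux of the argument and the only non-trivial point; separability, assumed throughout, is what makes it legitimate and ensures $\mu_c$ concentrates on $\operatorname{supp}(\mu_c)$. Intuitively, a gap of $\theta$ just above $0$ forces $\mu$-typical points to be isolated from the rest of the mass by a distance of at least $\delta$, which is incompatible with any non-atomic part.
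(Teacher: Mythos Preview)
Your proof is correct and follows the same overall architecture as the paper: compute $\mathbb{P}(D=0)=\sum_i m_i^2$ via Fubini, rule out zero atoms (no mass at $0$) and two or more atoms (would produce an atom of $\theta$ at a positive value), and then handle the single-atom case.

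The only noteworthy difference is in that last case. The paper normalizes the non-atomic remainder to a probability measure $\mu'$, applies Proposition~\ref{prop:0supp} to $(S,d,\mu')$ to conclude that the resulting distance distribution has $0$ in its support, and hence $\mathbb{P}(D\le\varepsilon)>\mathbb{P}(D=0)$ for every $\varepsilon>0$. You instead exploit the gap directly: from $\mathbb{P}(0<D<\delta)=0$ and Fubini you get $\mu(B(x,\delta)\setminus\{x\})=0$ for $\mu$-a.e.\ $x$, and then contradict this via $\mu_c(B(x,\delta))>0$ on $\operatorname{supp}(\mu_c)$. Your route is self-contained and does not invoke Proposition~\ref{prop:0supp}; the paper's is slightly more modular since it reuses that proposition. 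Substantively, both arguments unwind to the same separability fact that a non-atomic measure on a separable space cannot be supported on a set of points each isolated from the rest of the mass.
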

\begin{proof}
We prove this by contraposition.
First, if $\mu\{s\}=0$ for all $s\in S$, then by Fubini's theorem, we have
\[\mathbb{P}(D=0)=\int_S\int_S\mathbf{1}(s_1=s_2)\mathrm{d}\mu(s_2)\mathrm{d}\mu(s_1)=\int_S\mu\{s_1\}\mathrm{d}\mu(s_1)=0.\]
Next, if there exists $s_1\neq s_2\in S$ such that $\mu\{s_1\}>0$ and $\mu\{s_2\}>0$, then we have $d(s_1,s_2)>0$, and
\[\mathbb{P}(D=d(s_1,s_2))\geq\mathbb{P}(\xi_1=s_1\,;\,\xi_2=s_2)=\mu\{s_1\}\cdot\mu\{s_2\}>0.\]
Finally, if $\mu\{s_0\}>0$ for exactly one $s_0\in S$, then we have the following alternatives.
\begin{itemize}
\item If $\mu\{s_0\}=1$, then $D=0$ almost surely.
\item If $\mu\{s_0\}<1$, then $\mu'=(1-\mu\{s_0\})^{-1}\cdot(\mu-\mu\{s_0\}\cdot\delta_{s_0})$ is a probability measure such that $\mu'\{s\}=0$ for all $s\in S$.
By the first case treated above, the distribution $\theta'$ arising from $(S,d,\mu')$ is such that $\theta'\{0\}=0$.
On the other hand, by Proposition \ref{prop:0supp}, we have $\theta'[0,\varepsilon]>0$ for all $\varepsilon>0$.
It follows that
\[\mathbb{P}(D\leq\varepsilon)\geq\mu\{s_0\}^2+(1-\mu\{s_0\})^2\cdot\theta'[0,\varepsilon]>\mu\{s_0\}^2=\mathbb{P}(D=0).\]
\end{itemize}
In none of the cases treated above (which cover every possibility) the distribution of $D$ has the form stated in the proposition.
\end{proof}

\section{The tree constructions}

In this section we use tree constructions to prove our main results, Theorem \ref{thm:finite} and Proposition \ref{prop:petit0}.
For the proof of Theorem \ref{thm:finite}, the measured metric spaces $(S,d,\mu)$ we construct to achieve finitely supported distributions are finite rooted trees $(T,\rho)$ with edge-lengths, equipped with the natural metric $d$ induced by the edge-lengths, and endowed with a probability measure $\nu$ supported on the leaves.
We call the tuple $(T,\rho,d,\nu)$ a \emph{tree structure} for short.
Also, recall that we use ``$(T,\rho,d,\nu)$ achieves $\theta$'' as shorthand for ``if $\xi_1$ and $\xi_2$ are independent random variables with distribution $\nu$, then $d(\xi_1,\xi_2)$ has distribution $\theta$''.
Theorem \ref{thm:finite} is implied by:

\begin{Proposition}\label{prop:treeconstruction}
For any distribution of the form $\theta=p_0\cdot\delta_0+p_1\cdot\delta_{d_1}+\ldots+p_k\cdot\delta_{d_k}$, where $k\in\mathbb{N}$ and $0=d_0<d_1<\ldots<d_k$, and where $p_0,p_1,\ldots,p_k\in{]0,1[}$ are such that ${p_0+p_1+\ldots+p_k=1}$, there exists a tree structure $(T,\rho,d,\nu)$ that achieves $\theta$.
\end{Proposition}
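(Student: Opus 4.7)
The plan is to argue by induction on $k\geq1$, producing at each step an ultrametric tree structure whose leaves all lie at distance $d_k/2$ from the root. The base case $k=1$ corresponds to $\theta=p_0\delta_0+p_1\delta_{d_1}$, which I would achieve with a star: a root $\rho$ connected to $n$ leaves by edges of length $d_1/2$. Assigning weights $w_1,\ldots,w_n\in{]0,1[}$ summing to $1$ yields $\mathbb{P}(D=0)=\sum_iw_i^2$ and $\mathbb{P}(D=d_1)=1-\sum_iw_i^2$; as $(w_i)$ varies subject to the constraints, $\sum_iw_i^2$ sweeps the interval $[1/n,1)$, so for any prescribed $p_0\in{]0,1[}$ one can take $n\geq1/p_0$ and invoke the intermediate value theorem to realize $\sum_iw_i^2=p_0$.

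For the inductive step from $k-1$ to $k$ (with $k\geq2$), I would peel off the outermost atom and introduce the rescaled truncation
\[\theta':=\tfrac{1}{1-p_k}(p_0\delta_0+p_1\delta_{d_1}+\ldots+p_{k-1}\delta_{d_{k-1}}).\]
By induction there is an ultrametric tree structure $(T',\rho',d',\nu')$ achieving $\theta'$, with every leaf at distance $d_{k-1}/2$ from $\rho'$. I would then take $n$ disjoint copies $T'_1,\ldots,T'_n$ of this structure, introduce a new root $\rho$, and attach each $\rho'_i$ to $\rho$ by an edge of length $d_k/2-d_{k-1}/2>0$. In the resulting ultrametric tree $T$, all leaves lie at distance $d_k/2$ from $\rho$; two leaves in the same copy retain their $T'$-distance, while two leaves in distinct copies are at tree distance exactly $d_k$.

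The measure $\nu$ on $T$ is then defined by assigning total mass $W_i\in{]0,1[}$ to copy $i$, with $\sum_iW_i=1$, and distributing it internally as $W_i\cdot\nu'$. Conditioning on whether $\xi_1$ and $\xi_2$ fall in the same copy, one sees that $d(\xi_1,\xi_2)$ has distribution $\theta'$ in that event (of probability $\sum_iW_i^2$) and equals $d_k$ otherwise, so the target $\theta$ is reached precisely when $\sum_iW_i^2=1-p_k$. The same intermediate value argument as in the base case shows this is feasible provided $n\geq1/(1-p_k)$.

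The main---and rather mild---obstacle is therefore just the bookkeeping required to preserve ultrametricity through the induction and to verify the two interval-image feasibility conditions. No triangle inequality difficulty arises, since the ambient space is by construction a finite real tree with positive edge lengths, hence automatically metric.
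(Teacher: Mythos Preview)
Your argument is correct and matches the paper's proof almost exactly: both proceed by induction on $k$ with the strengthened hypothesis that every leaf sits at height $d_k/2$, peel off the largest atom by writing $\theta=(1-p_k)\theta'+p_k\delta_{d_k}$, glue several copies of the inductive tree to a new root via edges of length $(d_k-d_{k-1})/2$, and tune the copy weights so that $\sum_i W_i^2=1-p_k$. The only cosmetic differences are that the paper starts the induction at $k=0$ (a single point) rather than your $k=1$ star, and that you invoke the intermediate value theorem explicitly where the paper simply asserts the existence of suitable weights.
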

\begin{proof}
Let us prove, by induction on $k\in\mathbb{N}$, the more detailed assertion 
\begin{quote}
$H_k$: ``for any distribution of the form $\theta=p_0\cdot\delta_0+p_1\cdot\delta_{d_1}+\ldots+p_k\cdot\delta_{d_k}$, where $0=d_0<d_1<\ldots<d_k$, and where $p_0,p_1,\ldots,p_k\in{]0,1[}$ are such that ${p_0+p_1+\ldots+p_k=1}$, there exists a tree structure $(T,\rho,d,\nu)$ that achieves $\theta$, in which every leaf is at distance $d_k/2$ from the root vertex $\rho$''.
\end{quote}
The construction is illustrated in Figure \ref{Fig:1}.
Note that it is ``backwards'', in that the length of the edges emanating from $\rho$ is $(d_k-d_{k-1})/2$, not $d_1/2$.

\setlength{\unitlength}{0.19in}
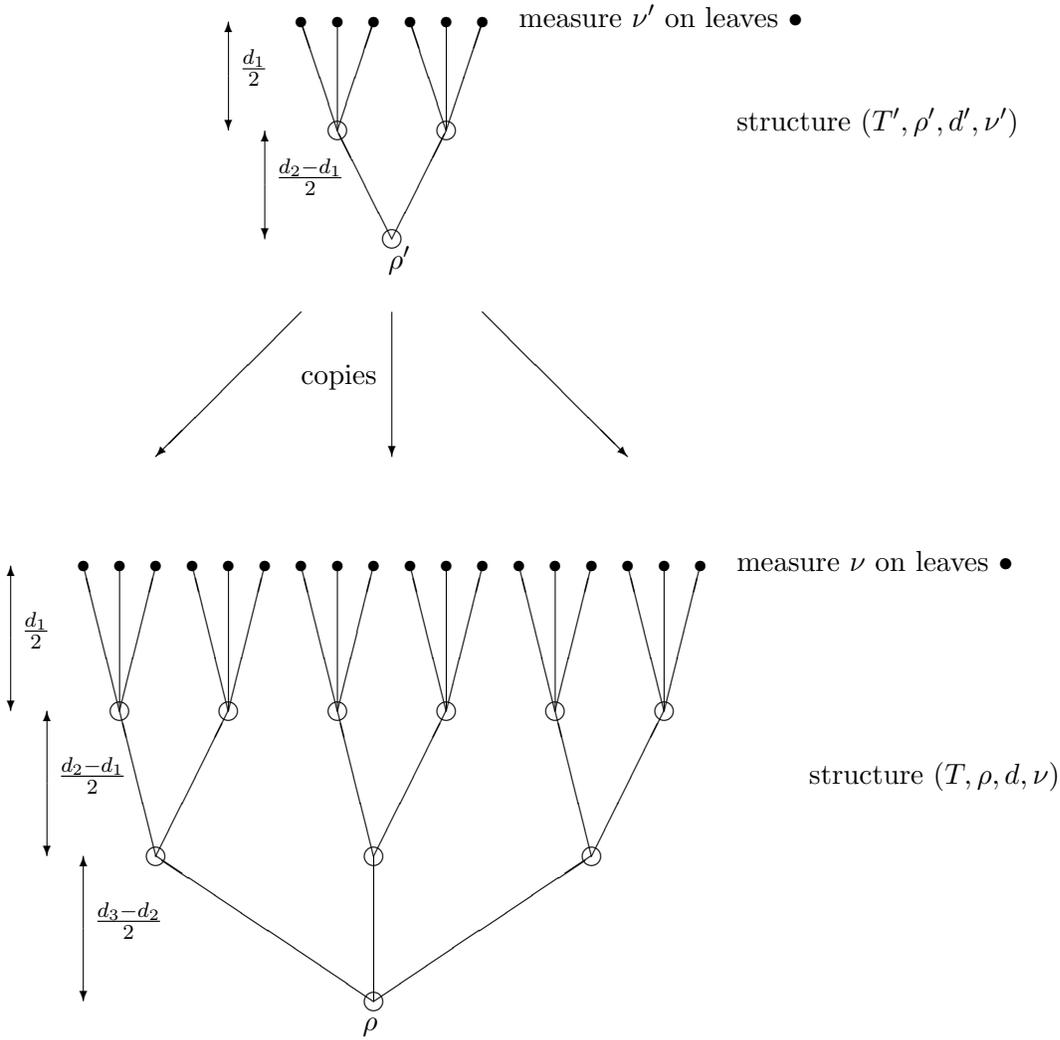
\begin{figure}[h!]
\begin{center}
\begin{picture}(30,33)(0,5)

\multiput(6,33)(1,0){6}{\circle*{0.3}}
\multiput(7,30)(3,0){2}{\circle{0.5}}
\put(8.5,27){\circle{0.5}}
\put(8.4,26.2){$\rho^\prime$}

\multiput(7,30)(3,0){2}{\line(-1,3){1}}
\multiput(7,30)(3,0){2}{\line(1,3){1}}
\multiput(7,30)(3,0){2}{\line(0,3){3}}

\put(8.5,27){\line(-1,2){1.5}}
\put(8.5,27){\line(1,2){1.5}}

\multiput(0,18)(1,0){18}{\circle*{0.3}}
\multiput(1,14)(3,0){6}{\circle{0.5}}
\multiput(2,10)(6,0){3}{\circle{0.5}}
\put(8,6){\circle{0.5}}
\put(7.7,5.2){$\rho$}

\multiput(1,14)(3,0){6}{\line(-1,4){1}}
\multiput(1,14)(3,0){6}{\line(1,4){1}}
\multiput(1,14)(3,0){6}{\line(0,4){4}}

\multiput(2,10)(6,0){3}{\line(-1,4){1}}
\multiput(2,10)(6,0){3}{\line(1,2){2}}

\put(8,6){\line(0,1){4}}
\put(8,6){\line(-3,2){6}}
\put(8,6){\line(3,2){6}}

\put(6,25){\vector(-1,-1){4}}
\put(11,25){\vector(1,-1){4}}
\put(8.5,25){\vector(0,-1){4}}
\put(6,23){copies}

\put(-2,16){\vector(0,1){2}}
\put(-2,16){\vector(0,-1){2}}
\put(-1.7,16){$\frac{d_1}{2}$}

\put(-1,12){\vector(0,1){2}}
\put(-1,12){\vector(0,-1){2}}
\put(-0.7,12){$\frac{d_2-d_1}{2}$}

\put(0,8){\vector(0,1){2}}
\put(0,8){\vector(0,-1){2}}
\put(0.3,8){$\frac{d_3-d_2}{2}$}

\put(5,28.5){\vector(0,1){1.5}}
\put(5,28,5){\vector(0,-1){1.5}}
\put(5.3,28.5){$\frac{d_2-d_1}{2}$}

\put(4,31.5){\vector(0,1){1.5}}
\put(4,31,5){\vector(0,-1){1.5}}
\put(4.3,31.5){$\frac{d_1}{2}$}

\put(12,32.85){measure $\nu'$ on leaves $\bullet$}
\put(18,17.85){measure $\nu$ on leaves $\bullet$}

\put(18,30){structure $(T',\rho',d',\nu')$}
\put(20,12){structure $(T,\rho,d,\nu)$}

\end{picture}

\caption{Illustrating construction of the tree structure $T = (T,\rho,d,\nu)$ for $n = 3$.}
\label{Fig:1}
\end{center}
\end{figure}

To prove $H_0$, consider the tree structure $(T,\rho,d,\nu)$ consisting of the single root vertex $\rho$.
Then, fix $k\in\mathbb{N}$, assume that $H_k$ holds, and let us prove $H_{k+1}$.
Let $\theta$ be a distribution of the form ${\theta=p_0\cdot\delta_0+p_1\cdot\delta_{d_1}+\ldots+p_{k+1}\cdot\delta_{d_{k+1}}}$, where ${0=d_0<d_1<\ldots<d_{k+1}}$, and where $p_0,p_1,\ldots,p_{k+1}\in{]0,1[}$ are such that ${p_0+p_1+\ldots+p_{k+1}=1}$.
We can write 
\[\theta=(1-p_{k+1})\cdot\theta'+p_{k+1}\cdot\delta_{d_{k+1}},\]
where
\[\theta'=\frac{p_0}{p_0+\ldots+p_k}\cdot\delta_0+\frac{p_1}{p_0+\ldots+p_k}\cdot\delta_{d_1}+\ldots+\frac{p_k}{p_0+\ldots+p_k}\cdot\delta_{d_k}.\]
Now, by $H_k$, there exists a tree structure $(T',\rho',d',\nu')$ that achieves $\theta'$, in which every leaf is at distance $d_k/2$ from the root vertex $\rho'$.
Then, let $j\in\mathbb{N}^*$ be large enough so that we can choose $m_1,\ldots,m_j\in{]0,1[}$ with ${m_1+\ldots+m_j=1}$, such that $m_1^2+\ldots+m_j^2=1-p_{k+1}$.
Take $j$ copies $(T_1,\rho_1,d_1,\nu_1),\ldots,(T_j,\rho_j,d_j,\nu_j)$ of $(T',\rho',d',\nu')$, and construct a tree structure  $(T,\rho,d,\nu)$ by first drawing $j$ edges of length ${(d_{k+1}-d_k)/2}$ emanating from a root vertex $\rho$, then grafting the $(T_i,\rho_i,d_i,\nu_i)$ onto those edges, identifying $\rho_i$ with the end-vertex of the corresponding edge, and letting $\nu=m_1\cdot\nu_1+\ldots+m_j\cdot\nu_j$.
By construction, the probability measure $\nu$ is supported on the leaves of $T$, and every leaf of $T$ is at distance ${d_k/2+(d_{k+1}-d_k)/2=d_{k+1}/2}$ from the root vertex $\rho$.
Now, let us check that $(T,\rho,d,\nu)$ achieves $\theta$.
Let $\xi_1$ and $\xi_2$ be independent random variables with distribution $\nu$.
For every Borel function $\varphi:\mathbb{R}_+\rightarrow\mathbb{R}_+$, we calculate
\[\begin{split}
\mathbb{E}[\varphi(d(\xi_1,\xi_2))]&=\sum_\text{$u_1,u_2$ leaves of $T$}\varphi(d(u_1,u_2))\cdot\nu\{u_1\}\cdot\nu\{u_2\}\\
&=\sum_{1\leq i_1,i_2\leq j}m_{i_1}\cdot m_{i_2}\cdot\sum_{\substack{\text{$u_1$ leaf of $T_{i_1}$}\\\text{$u_2$ leaf of $T_{i_2}$}}}\varphi(d(u_1,u_2))\cdot\nu_{i_1}\{u_1\}\cdot\nu_{i_2}\{u_2\}.
\end{split}\]
In the sum above, if $i_1=i_2$, then
\begin{eqnarray*}
\lefteqn{\sum_{\substack{\text{$u_1$ leaf of $T_{i_1}$}\\\text{$u_2$ leaf of $T_{i_2}$}}}\varphi(d(u_1,u_2))\cdot\nu_{i_1}\{u_1\}\cdot\nu_{i_2}\{u_2\}}\\
&=&\sum_{\text{$u_1,u_2$ leaves of $T'$}}\varphi(d(u_1,u_2))\cdot\nu'\{u_1\}\cdot\nu'\{u_2\}=\theta'(\varphi),
\end{eqnarray*}
by the definition of $(T',\rho',d',\nu')$.
On the other hand, if $i_1\neq i_2$, then the distance between any leaf $u_1$ of $T_{i_1}$ and any leaf $u_2$ of $T_{i_2}$ in the tree structure $(T,\rho,d,\nu)$ is 
\[d_k/2+(d_{k+1}-d_k)/2+(d_{k+1}-d_k)/2+d_k/2=d_{k+1},\]
hence
\begin{eqnarray*}
\lefteqn{\sum_{\substack{\text{$u_1$ leaf of $T_{i_1}$}\\\text{$u_2$ leaf of $T_{i_2}$}}}\varphi(d(u_1,u_2))\cdot\nu_{i_1}\{u_1\}\cdot\nu_{i_2}\{u_2\}}\\
&=&\sum_{\substack{\text{$u_1$ leaf of $T_{i_1}$}\\\text{$u_2$ leaf of $T_{i_2}$}}}\varphi(d_{k+1})\cdot\nu_{i_1}\{u_1\}\cdot\nu_{i_2}\{u_2\}=\varphi(d_{k+1}).
\end{eqnarray*}
It follows that
\[\begin{split}
\mathbb{E}[\varphi(d(\xi_1,\xi_2))]&=\sum_{i=1}^jm_i^2\cdot\theta'(\varphi)+\sum_{1\leq i_1\neq i_2\leq j}m_{i_1}\cdot m_{i_2}\cdot\varphi(d_{k+1})\\
&=(1-p_{k+1})\cdot\theta'(\varphi)+p_{k+1}\cdot\varphi(d_{k+1})=\theta(\varphi).
\end{split}\]
\end{proof}

\begin{Remark}\label{rem:limit} 
It is natural, for a given target distribution $\theta$, say with a smooth and compactly supported density, to approximate $\theta$ by distributions $\theta_n$ whose support is a finite set containing $0$, in such a way that $\theta_n\Rightarrow\theta$ as $n\to\infty$.
By Theorem \ref{thm:finite}, each $\theta_{n}$ is achieved by some tree structure $(T_n,\rho_n,d_n,\nu_n)$, and it is natural to seek for some (sub-)sequential limits of $(T_n,\rho_n,d_n,\nu_n)_{n\in\mathbb{N}^*}$ to achieve $\theta$. 
Alas, with the construction presented above, in general the sequence $(T_n,\rho_n,d_n,\nu_n)_{n\in\mathbb{N}^*}$ is not tight for the Gromov--Hausdorff--Prokhorov topology (see Section \ref{sec:csq} for a brief reminder on the Gromov--Hausdorff--Prokhorov topology).
Intuitively, the spaces we would end up with ``in the limit'' are rather non-separable. 
A cartoon of the phenomenon is the non-convergence of $n$-star graphs ($n$ vertices, each connected to the same root vertex by an edge of length $1$, with the uniform probability measure on the $(n+1)$ vertices) for the Gromov--Hausdorff Prokhorov topology, although for each $k\geq2$, their $k$-point functions converge in distribution as $n\to\infty$.
\end{Remark}

We now turn to the proof of Proposition \ref{prop:petit0}, which is again based on a tree construction.

\begin{proof}[Proof of Proposition \ref{prop:petit0}]
Let $(\kappa_n)_{n\in\mathbb{N}}$ be a sequence of positive integers to be adjusted, and let $T$ be the infinite spherically symmetric plane tree in which every node $u$ at height $n$ has $2\kappa_{n}$ children, the nodes $u1,\ldots,u(2\kappa_n)$ in the Neveu notation.\footnote{A vertex is labeled as a string $i_1i_2\ldots i_j$, meaning it is the $i_j$'th child of vertex $i_1i_2\ldots i_{j-1}$.}
For each $i\in\llbracket1,2\kappa_n\rrbracket$, we set the length of the edge between $u$ and $ui$ to be $2^{-(n+1)}$ if $i$ is odd, and $2\cdot2^{-(n+1)}$ is $i$ is even, and we denote by $d$ the metric on $T$ induced by these edge-lengths. 
If we complete $T$ into $\overline{T}=T\sqcup\partial T$ by adding its boundary $\partial T$, which consists of rays $u=(u_k)_{k\in\mathbb{N}}$ emanating from the root $\varnothing$ , then $\left(\overline{T},d\right)$ is compact. 
Now, the distribution of a non-backtracking random walk on $T$ starting at the root yields a natural Borel probability measure $\pi$ on $\overline{T}$, supported on $\partial T$.
Let $\xi_1$ and $\xi_2$ be independent random variables with distribution $\pi$, and let us consider the distribution of $d(\varnothing,\xi_1)$ and $d(\varnothing,\xi_2)$, and of $d(\xi_1,\xi_2)$.
Recall the following well known fact: if $B_1,B_2,\ldots$ are independent Bernoulli random variables with success probability $1/2$, then the random variable $\sum_{n\geq1}B_n\cdot2^{-n}$ has uniform distribution on $[0,1]$.
Now the random variables $d(\varnothing,\xi_1)$ and $d(\varnothing,\xi_2)$ have the same distribution as
\[\sum_{n\geq0}(1+\mathbf{1}(\text{$I_n$ is even}))\cdot2^{-(n+1)},\]
where $(I_n)_{n\in\mathbb{N}}$ is a sequence of independent random variables such that for each $n\in\mathbb{N}$, the random variable $I_n$ has uniform distribution on $\llbracket1,2\kappa_n\rrbracket$.
Therefore, the random variables $d(\varnothing,\xi_1)$ and $d(\varnothing,\xi_2)$ have the same distribution as $1+U$, where $U$ has uniform distribution on $[0,1]$.

We can continue this argument (details deferred) to show
\begin{Lemma}
\label{L:xixi}
 The random variable $d(\xi_1,\xi_2)$ has a continuous probability density function $\Psi$ given by
\begin{equation}\label{eq:pdfPsi}
\Psi(x)=\sum_{n\geq0}\frac{2\kappa_n-1}{(2\kappa_0)\cdot\ldots\cdot(2\kappa_n)}\cdot\psi_n(x),
\end{equation}
where for each $n\in\mathbb{N}$, the function $\psi_n$ is a continuous probability density function supported in $\left[4\cdot2^{-(n+1)},4\cdot2^{-n}\right]$, and bounded by $2^{n+1}$ over this interval.
\end{Lemma}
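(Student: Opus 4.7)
My plan is to condition on the height $N$ of the most recent common ancestor of $\xi_1$ and $\xi_2$, and then to write $d(\xi_1,\xi_2)$ explicitly on each event $\{N=n\}$. Since the non-backtracking walk at a height-$k$ vertex picks one of its $2\kappa_k$ children uniformly, two independent walks that coincide through height $k$ also coincide at height $k{+}1$ with conditional probability $1/(2\kappa_k)$, so
\[\mathbb{P}(N=n)\ =\ \prod_{k=0}^{n-1}\frac{1}{2\kappa_k}\cdot\left(1-\frac{1}{2\kappa_n}\right)\ =\ \frac{2\kappa_n-1}{(2\kappa_0)\cdots(2\kappa_n)},\]
which already supplies the coefficients in \eqref{eq:pdfPsi}. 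It then suffices to identify $\psi_n$ with the conditional density of $d(\xi_1,\xi_2)$ given $\{N=n\}$.

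On $\{N=n\}$, writing $u_n$ for the common height-$n$ ancestor, one has $d(\xi_1,\xi_2)=d(u_n,\xi_1)+d(u_n,\xi_2)$. I would decompose each summand as the length of the first edge out of $u_n$, namely $2^{-(n+1)}(1+B_n^{(j)})$ where $B_n^{(j)}\in\{0,1\}$ encodes the parity of the chosen child index $i_j$, plus the residual distance from the height-$(n+1)$ vertex to $\xi_j$. Applying to the Bernoulli parity sequence at heights $>n$ the very identity the authors used for $d(\varnothing,\xi_j)$ shows this residual is distributed as $2^{-(n+1)}(1+U_j)$ with $U_j$ uniform on $[0,1]$. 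Since the $U_j$ depend only on heights $>n$, they are independent of each other and of $(B_n^{(1)},B_n^{(2)})$, and I obtain
\[d(\xi_1,\xi_2)\ \stackrel{d}{=}\ 2^{-(n+1)}\bigl(4+W+U_1+U_2\bigr)\quad\text{on }\{N=n\},\]
where $W:=B_n^{(1)}+B_n^{(2)}\in\{0,1,2\}$ is independent of $(U_1,U_2)$.

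From here, $\psi_n$ is the density of $2^{-(n+1)}(4+W+V)$ with $V:=U_1+U_2$ having the standard triangular density $f_V$ on $[0,2]$, which is continuous, bounded by $1$, and vanishes at $0$ and $2$. The convolution of $f_V$ with the law of $W$ is the finite mixture $\sum_{w=0}^{2}\mathbb{P}(W=w)\,f_V(\cdot-w)$, hence continuous on $[0,4]$, vanishing at $0$ and $4$, and bounded by $1$ as a convex combination of quantities bounded by $1$. After the affine scaling, $\psi_n$ is continuous, supported in $[4\cdot 2^{-(n+1)},4\cdot 2^{-n}]$, bounded by $2^{n+1}$, and vanishes at both endpoints. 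Plugging into $\Psi(x)=\sum_n\mathbb{P}(N=n)\psi_n(x)$ yields \eqref{eq:pdfPsi}, and continuity of $\Psi$ on $\mathbb{R}_+^*$ is immediate: for each $x>0$ at most one term contributes, and neighboring terms meet only at the single point $4\cdot 2^{-n}$ where both $\psi_{n-1}$ and $\psi_n$ vanish.

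The only step requiring care is the conditional law of $(i_1,i_2)$ given $\{N=n\}$, namely uniform over ordered pairs of distinct elements of $\llbracket 1,2\kappa_n\rrbracket$, which induces a mild correlation between $B_n^{(1)}$ and $B_n^{(2)}$. This is the main subtlety, but it is ultimately harmless: it affects only the mixture weights $\mathbb{P}(W=w)$ above, not the uniform bound $f_V\leq 1$, so both the continuity and the bound $2^{n+1}$ remain intact.
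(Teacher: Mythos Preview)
Your proof is correct and follows the same route as the paper: condition on the height of the most recent common ancestor to obtain the coefficients $(2\kappa_n-1)/\prod_{i\le n}(2\kappa_i)$, then write the conditional distance as $2^{-(n+1)}(4+W+U_1+U_2)$ with $W$ the sum of the two parity bits at level $n$ (conditioned on $I_n^1\neq I_n^2$) and $U_1,U_2$ independent uniforms from the deeper levels. The only cosmetic difference is that the paper writes out the explicit three-term mixture for $\psi_n$ in terms of the triangular density $\phi$, whereas you argue the support, continuity, and the bound $\psi_n\le 2^{n+1}$ directly from $\phi\le 1$; your extra remark on continuity of $\Psi$ at the interval endpoints $4\cdot 2^{-n}$ is a detail the paper leaves implicit.
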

Granted Lemma \ref{L:xixi}, we complete the proof of Proposition \ref{prop:petit0} as follows.
Choose $(\kappa_n)_{n\in\mathbb{N}}$ so that
\[(2\kappa_0)\cdot\ldots\cdot(2\kappa_n)\geq F\left(4\cdot2^{-(n+2)}\right)^{-1}\quad\text{for all $n\in\mathbb{N}$.}\]
This is certainly possible, it suffices to be very crude and choose 
\[\kappa_n\geq F\left(4\cdot2^{-(n+2)}\right)^{-1}\quad\text{for all $n\in\mathbb{N}$.}\]
This way, for each $n\in\mathbb{N}^*$, we have, for all $\varepsilon\in\left[4\cdot2^{-(n+1)},4\cdot2^{-n}\right]$,
\[\begin{split}
\int_0^\varepsilon\Psi(x)\mathrm{d}x&\leq\sum_{p\geq n}\frac{2\kappa_p-1}{(2\kappa_0)\cdot\ldots\cdot(2\kappa_p)}\\
&=\sum_{p\geq n}\left(\frac{1}{(2\kappa_0)\cdot\ldots\cdot(2\kappa_{p-1})}-\frac{1}{(2\kappa_0)\cdot\ldots\cdot(2\kappa_p)}\right)\\
&=\frac{1}{(2\kappa_0)\cdot\ldots\cdot(2\kappa_{n-1})}\\
&\leq F\left(4\cdot2^{-(n+1)}\right)\leq F(\varepsilon),
\end{split}\]
hence $\int_0^\varepsilon\Psi(x)\mathrm{d}x\leq F(\varepsilon)$ for all $\varepsilon\in{]0,2]}$.
\end{proof}

\begin{proof}[Proof of Lemma \ref{L:xixi}]
For each $n\in\mathbb{N}$, we have
\[\mathbb{P}(|\xi_1\wedge\xi_2|=n)=\frac{2\kappa_n-1}{(2\kappa_0)\cdot\ldots\cdot(2\kappa_n)},\]
and conditionally on $(|\xi_1\wedge\xi_2|=n)$, the random variable $d(\xi_1,\xi_2)$ has the same distribution as
\begin{multline*}
\sum_{p\geq n+1}\left(1+\mathbf{1}\left(\text{$I^1_p$ is even}\right)\right)\cdot2^{-(p+1)}+\left(1+\mathbf{1}\left(\text{$I^1_n$ is even}\right)\right)\cdot2^{-(n+1)}\\
+\left(1+\mathbf{1}\left(\text{$I^2_n$ is even}\right)\right)\cdot2^{-(n+1)}+\sum_{p\geq n+1}\left(1+\mathbf{1}\left(\text{$I^2_p$ is even}\right)\right)\cdot2^{-(p+1)}
\end{multline*}
conditioned $I^1_n\neq I^2_n$, where $\left(I^1_p\right)_{p\in\mathbb{N}}$ and $\left(I^2_p\right)_{p\in\mathbb{N}}$ are independent sequences of independent random variables such that for each $p\in\mathbb{N}$, the random variables $I^1_p$ and $I^2_p$ have uniform distribution on $\llbracket1,2\kappa_p\rrbracket$.
We simplify this into: conditionally on $(|\xi_1\wedge\xi_2|=n)$, the random variable $d(\xi_1,\xi_2)$ has the same distribution as
\begin{multline*}
(1+U_1)\cdot2^{-(n+1)}+\left(2+\mathbf{1}\left(\text{$I^1_n$ is even}\right)+\mathbf{1}\left(\text{$I^2_n$ is even}\right)\right)\cdot2^{-(n+1)}+(1+U_2)\cdot2^{-(n+1)}\\
=\left(4+\mathbf{1}\left(\text{$I^1_n$ is even}\right)+\mathbf{1}\left(\text{$I^2_n$ is even}\right)+U_1+U_2\right)\cdot2^{-(n+1)}
\end{multline*}
conditioned $I^1_n\neq I^2_n$, where $U_1$ and $U_2$ are independent random variables with uniform distribution on $[0,1]$, independent of $I^1_n$ and $I^2_n$.
Finally, we check that the last distribution has density $\psi_n$ given by
\begin{eqnarray*}
\psi_n(x)&=&\frac{\kappa_n(\kappa_n-1)}{2\kappa_n(2\kappa_n-1)}\cdot2^{n+1}\cdot\phi\left(2^{n+1}\cdot x-4\right)\\
&&+\frac{2\kappa_n^2}{2\kappa_n(2\kappa_n-1)}\cdot2^{n+1}\cdot\phi\left(2^{n+1}\cdot x-5\right)\\
&&+\frac{\kappa_n(\kappa_n-1)}{2\kappa_n(2\kappa_n-1)}\cdot2^{n+1}\cdot\phi\left(2^{n+1}\cdot x-6\right),
\end{eqnarray*}
for all $x\in\mathbb{R}_+$, where $\phi$ is the probability density function of $U_1+U_2$.
The first term accounts for the case where neither $I^1_n$ nor $I^2_n$ is even, the second term for the case where exactly one of them is even, and the last term for the case where both are even.
Note that we have
\[\phi(x)=\begin{cases}
x&\text{if $x\in[0,1]$}\\
2-x&\text{if $x\in[1,2]$}\\
0&\text{otherwise}
\end{cases}\quad\text{for all $x\in\mathbb{R}_+$.}\]
Equation \eqref{eq:pdfPsi} readily follows.
\end{proof}

\section{Consequences}\label{sec:csq}

In this section, we prove Proposition \ref{prop:?} and Corollary \ref{cor:density}.
We start with the proof of Proposition \ref{prop:?}, which relies on the tree constructions of the previous section.

\begin{proof}[Proof of Proposition \ref{prop:?}]
Let $f$ be a continuous probability density function on $\mathbb{R}_+$, and assume that there exists $\eta>0$ such that $f(x)>0$ for all $x\in{]0,\eta]}$.
First, consider the following lemma, which is very similar to Proposition \ref{prop:petit0}.
\begin{Lemma}\label{lem:?}
There exists a pointed compact measured metric space $(X,d,\pi,x_0)$ such that the following holds, where $\xi_1$ and $\xi_2$ are independent random variables with distribution $\pi$.
\begin{itemize}
\item The random variable $d(\xi_1,\xi_2)$ has a continuous probability density function $\Psi$ supported on $[0,4]$ such that for every $\varepsilon>0$, we have $\Psi(x)=o(f(\varepsilon x))$ as $x\to0^+$.
\item The random variables $d(x_0,\xi_1)$ and $d(x_0,\xi_2)$ have the same distribution as $1+U$, where $U$ has uniform distribution on $[0,1]$.
\end{itemize}
\end{Lemma}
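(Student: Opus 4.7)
The plan is to imitate the tree construction of Proposition \ref{prop:petit0} with a more carefully tuned sequence of branching numbers $(\kappa_n)_{n \in \mathbb{N}}$. I take $(X, d, \pi, x_0) = (\overline{T}, d, \pi, \varnothing)$ where $T$ is the same spherically symmetric plane tree with edges of length $2^{-(n+1)}$ at odd-indexed children of a height-$n$ vertex and $2 \cdot 2^{-(n+1)}$ at even-indexed ones, $\pi$ is the law of a non-backtracking random walk from the root $\varnothing$, and $x_0 = \varnothing$. The second bullet is then immediate from the same Bernoulli expansion already computed in the proof of Proposition \ref{prop:petit0}: $d(\varnothing, \xi_i) = 1 + U_i$ with $U_i$ uniform on $[0,1]$. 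Consequently $\mathrm{supp}(\Psi) \subset [0, 4]$ by the triangle inequality $d(\xi_1, \xi_2) \le d(\varnothing, \xi_1) + d(\varnothing, \xi_2)$.

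The remaining task is to choose $(\kappa_n)$ so that the resulting density $\Psi$ satisfies $\Psi(x) = o(f(\varepsilon x))$ at $0^+$ for every $\varepsilon > 0$. By Lemma \ref{L:xixi}, for $x$ in the open interval $(2 \cdot 2^{-n}, 4 \cdot 2^{-n})$ only the $n$-th term of \eqref{eq:pdfPsi} contributes, and $\|\psi_n\|_\infty \le 2^{n+1}$, giving
\[
\Psi(x) \le r_n := \frac{2^{n+1}}{(2\kappa_0)\cdots(2\kappa_{n-1})}.
\]
Conversely, any nonincreasing positive target sequence $(r_n)$ with $r_0 = 2$ is realized by the choice $\kappa_n = \lceil r_n/r_{n+1} \rceil$, so the problem reduces to driving $r_n$ to zero fast enough relative to $f$.

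For the choice, I set $J_0 := \lceil \log_2(8/\eta) \rceil$ and define $\phi(j) := \inf_{y \in [2^{-j-1}, 2^{-j+3}]} f(y)$, which is strictly positive for $j \ge J_0$ by the continuity and positivity of $f$ on $(0, \eta]$. I then take
\[
r_n := \frac{1}{n}\cdot \min_{J_0 \le j \le 2n} \phi(j) \qquad (n \ge J_0),
\]
with $r_0, \ldots, r_{J_0 - 1}$ filled in arbitrarily while keeping $(r_n)$ nonincreasing. Given $\varepsilon > 0$, pick $k \in \mathbb{Z}$ with $\varepsilon \in [2^{-k}, 2^{-k+1}]$: for $x \in (2 \cdot 2^{-n}, 4 \cdot 2^{-n}]$ one checks $\varepsilon x \in (2^{-(n+k)+1}, 2^{-(n+k)+3}]$, so $f(\varepsilon x) \ge \phi(n+k)$. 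For $n$ large enough that $n+k \in [J_0, 2n]$, the definition of $r_n$ then yields $\Psi(x)/f(\varepsilon x) \le r_n/\phi(n+k) \le 1/n \to 0$, which is the required asymptotic.

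The main difficulty is this diagonalization: achieving the $o(f(\varepsilon \cdot))$ bound \emph{simultaneously for all} $\varepsilon > 0$ using a single sequence $(\kappa_n)$, handled above by letting $r_n$ dominate the infimum of $\phi$ over a window of dyadic scales that grows with $n$. Everything else (the density formula \eqref{eq:pdfPsi}, the boundary distributions $1 + U$, the compactness of $\overline{T}$) is inherited verbatim from the proof of Proposition \ref{prop:petit0}, independent of the particular choice of $(\kappa_n)$.
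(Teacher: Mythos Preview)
Your argument is correct and follows the paper's route: take $(X,d,\pi,x_0)=(\overline{T},d,\pi,\varnothing)$ from the construction in Proposition~\ref{prop:petit0} and tune $(\kappa_n)$ so that the density $\Psi$ from \eqref{eq:pdfPsi} decays fast enough near $0$.  The only difference is how the uniformity in $\varepsilon$ is packaged: the paper introduces the single auxiliary function $g(x)=\min_{[x^2,\eta]}f$, notes that $g(x)\le f(\varepsilon x)$ for small $x$ (since $x^2\le\varepsilon x$), and then only has to arrange $\Psi(x)\le x\,g(x)$; you instead track dyadic scales directly and let $r_n$ control a window $[J_0,2n]$ that widens with $n$.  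These are the same idea---your $\min_{J_0\le j\le 2n}\phi(j)$ is essentially $g(c\cdot 2^{-n})$---but the paper's reduction to a single function is slightly tidier.

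One small imprecision worth noting: taking $\kappa_n=\lceil r_n/r_{n+1}\rceil$ gives an actual bound $\tilde r_n\le r_n$ rather than equality (which is all you need), and your ``fill in $r_0,\ldots,r_{J_0-1}$ nonincreasingly from $r_0=2$'' presupposes $r_{J_0}\le 2$, which your formula does not guarantee if $f$ is large near $0$.  This is harmless---just replace your $r_n$ by $\min(2,r_n)$, which does not affect the tail estimate.
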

\begin{proof}[Proof of the lemma]
Keeping the notation introduced in the proof of Proposition \ref{prop:petit0}, let us pick up the construction of the compact measured metric space $\left(\overline{T},d,\pi\right)$, where the $(\kappa_n)_{n\in\mathbb{N}}$ are  to be chosen later.
The random variable $d(\xi_1,\xi_2)$ has a continuous probability density function $\Psi$ given by \eqref{eq:pdfPsi}, hence such that for each $n\in\mathbb{N}$, we have
\[\Psi(x)\leq\frac{2\kappa_n-1}{(2\kappa_0)\cdot\ldots\cdot(2\kappa_n)}\cdot2^{n+1}\quad\text{for all $x\in\left[4\cdot2^{-(n+1)},4\cdot2^{-n}\right]$.}\]
Now, let us adjust the $(\kappa_n)_{n\in\mathbb{N}}$ so that for each $\varepsilon>0$, we have $\Psi(x)=o(f(\varepsilon x))$ as ${x\to0^+}$.
Note that we can reduce the problem to simply requiring that $\Psi(x)=o(g(x))$ as $x\to0^+$, where ${g(x)=\min_{\left[x^2,\eta\right]}f}$.
Indeed, for each $\varepsilon>0$, as $x\mapsto\min_{[x,\eta]}f$ is non-decreasing, we have 
\[g(x)=\min_{\left[x^2,\eta\right]}f\leq\min_{[\varepsilon x]}f\leq f(\varepsilon x)\quad\text{for all sufficiently small $x$.}\]
Therefore, let us choose the $(\kappa_n)_{n\in\mathbb{N}}$ so that
\[\kappa_0\cdot\ldots\cdot\kappa_n\geq\frac{2^{n+1}}{g\left(4\cdot2^{-(n+2)}\right)}\quad\text{for all sufficiently large $n$.}\]
This is certainly possible, it suffices to be very crude and take
\[\kappa_n\geq\frac{2^{n+1}}{g\left(4\cdot2^{-(n+2)}\right)}\quad\text{for all sufficiently large $n$.}\]
This way, for all sufficiently large $n$, we have, for all $x\in\left[4\cdot2^{-(n+1)},4\cdot2^{-n}\right]$,
\[\begin{split}
\Psi(x)&\leq\frac{2\kappa_n-1}{(2\kappa_0)\cdot\ldots\cdot(2\kappa_n)}\cdot2^{n+1}\\
&\leq\frac{2\kappa_n}{(2\kappa_0)\cdot\ldots\cdot(2\kappa_n)}\cdot2^{n+1}\\
&=\frac{2}{\kappa_0\cdot\ldots\cdot\kappa_{n-1}}\\
&\leq4\cdot2^{-(n+1)}\cdot g\left(4\cdot2^{-(n+1)}\right)\leq x\cdot g(x),
\end{split}\]
hence $\Psi(x)\leq x\cdot g(x)$ for all sufficiently small $x$.
To complete the proof of the lemma, we let $(X,d,\pi,x_0)=\left(\overline{T},d,\pi,\varnothing\right)$.
\end{proof}

Now with the $\left(X,d,\pi,x_0\right)$ and $\Psi$ of Lemma \ref{lem:?} at hand, we resume the proof of the proposition.
Fix $\beta\in{]0,1/3[}$, and fix an integer $n\geq4$: using the tree construction of Proposition \ref{prop:treeconstruction} together with Lemma \ref{lem:?}, we will construct a compact measured metric space $(S,d,\mu)$ such that the distribution $\theta$ arising from $(S,d,\mu)$ has a continuous probability density function $g$ on $\mathbb{R}_+$, with $g\leq(1+\zeta(\beta,n))\cdot f$, where $\zeta(\beta,n)\rightarrow0$ as $\beta\to0$ and $n\to\infty$.

The proof is slightly technical.
First, fix $0<\kappa<K$ such that ${\int_\kappa^Kf(x)\mathrm{d}x\geq1-\beta}$.
Then, let $B=\left\{x\in[\kappa,K]:f(x)\geq\beta/K\right\}$, and note that 
\[\int_Bf(x)\mathrm{d}x=\int_\kappa^Kf(x)\mathrm{d}x-\int_{[\kappa,K]\setminus B}f(x)\mathrm{d}x\geq(1-\beta)-K\cdot\frac{\beta}{K}=1-2\beta.\]
Since $f$ is uniformly continuous on $[0,K]$, we can fix $\varepsilon\in{]0,\eta\wedge\kappa]}$ such that 
\[|f(x)-f(y)|\leq\frac{\beta^2}{K}\quad\text{for all $x,y\in[0,K]$ with $|x-y|\leq\varepsilon$.}\]
Next, we claim that it is possible to fix $p_0\in{]0,1[}$ such that
\begin{equation}\label{eq:belowf}
p_0\cdot\Psi(x)\leq\varepsilon/n\cdot f(\varepsilon/n\cdot x)\quad\text{for all $x\in[0,4]$,}
\end{equation}
where $\Psi$ is the continuous probability density function supported on $[0,4]$ provided by Lemma \ref{lem:?}.
Indeed, since ${\Psi(x)=o(f(\varepsilon/n\cdot x))}$ as $x\to0^+$, there exists $x_0\in{]0,4]}$ such that 
\[\Psi(x)\leq\varepsilon/n\cdot f(\varepsilon/n\cdot x)\quad\text{for all $x\in{[0,x_0]}$.}\]
Then, since 
\[\min_{x\in[x_0,4]}f(\varepsilon/n\cdot x)=\min_{[\varepsilon x_0/n,4\varepsilon/n]}f>0\]
(we use here that $4\varepsilon/n\leq\eta$), we can fix $p_0\in{]0,1[}$ such that 
\[p_0\cdot\max_{[0,4]}\Psi\leq\varepsilon/n\cdot\min_{[\varepsilon x_0/n,4\varepsilon/n]}f.\]
Equation \eqref{eq:belowf} readily follows.
Next, let $d_1<\ldots<d_k\in B$ be at least $\varepsilon$ apart from each other, and such that ${[d_1,d_1+\varepsilon]\cup\ldots\cup[d_k,d_k+\varepsilon]\supset B}$.
To find such points, it suffices to take $d_1=\inf B$, and by induction, for every $i\in\mathbb{N}^*$ such that $d_1,\ldots,d_i$ have been constructed, proceed as follows: if ${[d_1,d_1+\varepsilon]\cup\ldots\cup[d_i,d_i+\varepsilon]}$ already covers $B$, then terminate, otherwise, let 
\[d_{i+1}=\inf(B\setminus([d_1,d_1+\varepsilon]\cup\ldots\cup[d_i,d_i+\varepsilon])).\]
At the end of the construction, note that since the intervals $[d_1,d_1+\varepsilon[,\ldots,[d_k,d_k+\varepsilon[$ are disjoint and included in $[\kappa,K+\varepsilon[$, we have $k\cdot\varepsilon\leq(K+\varepsilon)-\kappa\leq K$ (we use here that $\varepsilon\leq\kappa$).
Now, by the definition of $\varepsilon$, for each $i\in\llbracket1,k\rrbracket$, we have ${|f(d_i)-f(x)|\leq\left.\beta^2\middle/K\right.}$ for all $x\in[d_i,d_i+\varepsilon]$, hence
\[\sum_{i=1}^k\varepsilon\cdot f(d_i)\geq\sum_{i=1}^k\left(\int_{d_i}^{d_i+\varepsilon}f(x)\mathrm{d}x-\varepsilon\cdot\frac{\beta^2}{K}
\right)\geq\int_Bf(x)\mathrm{d}x-K\cdot\frac{\beta^2}{K}\geq(1-2\beta)-\beta^2\geq1-3\beta.\]
By the mean value theorem, we deduce that it is possible to fix $\alpha_1,\ldots,\alpha_k\in{]0,1/(1-3\beta)[}$ such that 
\[\alpha_1\cdot\varepsilon\cdot f(d_1)+\ldots+\alpha_k\cdot\varepsilon\cdot f(d_k)=1-p_0,\]
i.e, such that
\[\theta=p_0\cdot\delta_0+\alpha_1\cdot\varepsilon\cdot f(d_1)\cdot\delta_{d_1}+\ldots+\alpha_k\cdot\varepsilon\cdot f(d_k)\cdot\delta_{d_k}\]
is a probability measure.
Now, let us discretise further: we let $J_n=\left\llbracket0,n^2-4n\right\rrbracket$, and consider
\[\theta'=p_0\cdot\delta_0+\sum_{i=1}^k\alpha_i\cdot\varepsilon\cdot f(d_i)\cdot\frac{1}{\#J_n}\sum_{j\in J_n}\delta_{d_i+j\cdot\varepsilon\cdot n^{-2}}.\]
By Proposition \ref{prop:treeconstruction}, there exists a tree structure $(T,\rho,d,\nu)$ that achieves $\theta'$.
We use it to construct our compact measured metric space $(S,d,\mu)$ as follows.
Onto each leaf $u$ of $T$, we graft a copy $\left(X^u,d^u,\pi^u,x_0^u\right)$ of the pointed compact measured metric space $(X,d,\pi,x_0)$ provided by Lemma \ref{lem:?}, scaling distances by $\varepsilon/n$, and identifying the marked point $x_0^u$ with $u$.
The construction is represented in Figure \ref{fig:density}.

\begin{figure}[ht]
\centering
\includegraphics[width=0.6\linewidth]{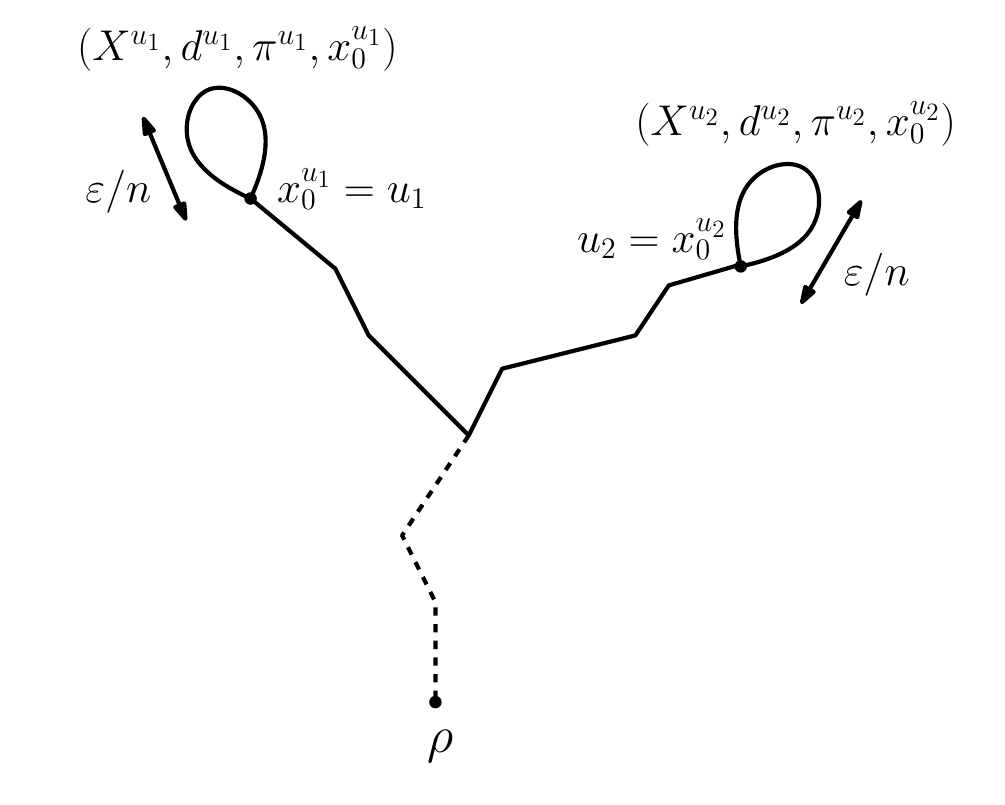}
\caption{A part of the construction of $(S,d,\mu)$. Let $\xi_1$ and $\xi_2$ be independent random variables with distribution $\mu$, and let $u_1\neq u_2$ be leaves of $T$. Conditionally on ${\xi_1\in X^{u_1}}$ and ${\xi_2\in X^{u_2}}$, the random variable $d(\xi_1,\xi_2)$ has the same distribution as ${\varepsilon/n\cdot(1+U_1)+d(u_1,u_2)+\varepsilon/n\cdot(1+U_2)=d(u_1,u_2)+\varepsilon/n\cdot(2+U_1+U_2)}$, where $U_1$ and $U_2$ are independent random variables with uniform distribution on $[0,1]$. Moreover, by construction, we have $d(u_1,u_2)=d_i+j\cdot\varepsilon\cdot n^{-2}$ for some $i\in\llbracket1,k\rrbracket$ and $j\in J_n$.}\label{fig:density}
\end{figure}

We denote by $(S,d,\mu)$ the compact measured metric space obtained in this way, where ${\mu=\sum_{\text{$u$ leaf of $T$}}\nu\{u\}\cdot\pi^u}$.
Now, we claim that, if $\xi_1$ and $\xi_2$ are two independent random variables with distribution $\mu$, then the random variable $d(\xi_1,\xi_2)$ has probability density function $g$ given by
\begin{eqnarray*}
\lefteqn{g(x)}\\
&=&p_0\cdot\frac{\Psi\left(x\cdot(\varepsilon/n)^{-1}\right)}{\varepsilon/n}+\sum_{i=1}^k\alpha_i\cdot\varepsilon\cdot f(d_i)\cdot\frac{1}{\#J_n}\cdot\sum_{j\in J_n}\frac{\phi\left(\left(x-\left(d_i+j\cdot\varepsilon\cdot n^{-2}\right)\right)\cdot(\varepsilon/n)^{-1}-2\right)}{\varepsilon/n}\\
&=&p_0\cdot\frac{\Psi\left(x\cdot(\varepsilon/n)^{-1}\right)}{\varepsilon/n}+\sum_{i=1}^k\alpha_i\cdot f(d_i)\cdot\frac{n}{\#J_n}\cdot\sum_{j\in J_n}\phi\left(\frac{x-d_i}{\varepsilon/n}-\frac{j}{n}-2\right)
\end{eqnarray*}
for all $x\in\mathbb{R}_+$, where $\phi$ is the probability density function of the sum of two independent random variables with uniform distribution on $[0,1]$.
See Figure \ref{fig:density}.

Now, recall that $\Psi$ is supported in $[0,4]$, hence 
\[\Psi\left(\frac{x}{\varepsilon/n}\right)>0\Longrightarrow x\in[0,4\varepsilon/n].\]
Moreover, since $\phi$ is supported in $[0,2]$, we have, for each $i\in\llbracket1,k\rrbracket$, by the definition of $J_n$:
\[\sum_{j\in J_n}\phi\left(\frac{x-d_i}{\varepsilon/n}-\frac{j}{n}-2\right)>0\Longrightarrow x\in[d_i,d_i+\varepsilon].\]
Since $4\varepsilon/n\leq\kappa\leq d_1$, the interval $[0,4\varepsilon/n[$ and the intervals ${({[d_i,d_i+\varepsilon[}\,;\,i\in\llbracket1,k\rrbracket)}$ are mutually disjoint: thus, we have
\[g(x)=\begin{cases}
p_0\cdot\frac{\Psi\left(x\cdot(\varepsilon/n)^{-1}\right)}{\varepsilon/n}&\text{if $x\in[0,4\varepsilon/n]$}\\
\alpha_i\cdot f(d_i)\cdot\frac{n}{\#J_n}\cdot\sum_{j\in J_n}\phi\left(\frac{x-d_i}{\varepsilon/n}-\frac{j}{n}-2\right)&\text{if $x\in[d_i,d_i+\varepsilon]$}\\
0&\text{otherwise.}
\end{cases}\quad\text{for all $x\in\mathbb{R}_+$}\]
To conclude the proof, we compare $g$ with $f$ over each one of these intervals.
First, by \eqref{eq:belowf}, we have
\[g(x)=p_0\cdot\frac{\Psi\left(x\cdot(\varepsilon/n)^{-1}\right)}{\varepsilon/n}\leq f(x)\quad\text{for all $x\in[0,4\varepsilon/n]$.}\]
Then, for each $i\in\llbracket1,k\rrbracket$, we have
\begin{equation}\label{eq:boundoverinti}
\begin{split}
g(x)&=\alpha_i\cdot f(d_i)\cdot\frac{n}{\#J_n}\cdot\sum_{j\in J_n}\phi\left(\frac{x-d_i}{\varepsilon/n}-\frac{j}{n}-2\right)\\
&\leq\frac{1}{1-3\beta}\cdot\frac{f(x)}{1-\beta}\cdot\frac{n}{\#J_n}\cdot\max_{x\in[d_i,d_i+\varepsilon]}\sum_{j\in J_n}\phi\left(\frac{x-d_i}{\varepsilon/n}-\frac{j}{n}-2\right)
\end{split}
\end{equation}
for all $x\in[d_i,d_i+\varepsilon]$, where the inequality for the second term comes from rearranging
\[\left|f(d_i)-f(x)\right|\leq\frac{\beta^2}{K}=\beta\cdot\frac{\beta}{K}\leq\beta\cdot f(d_i).\]
Finally, let us bound
\[\max_{x\in[d_i,d_i+\varepsilon]}\sum_{j\in J_n}\phi\left(\frac{x-d_i}{\varepsilon/n}-\frac{j}{n}-2\right)=\max_{y\in[0,1]}\sum_{j\in J_n}\phi\left(ny-\frac{j}{n}-2\right).\]
For every $y\in[0,1]$, we have
\[\begin{split}
\sum_{j\in J_n}\phi\left(ny-\frac{j}{n}-2\right)&=\sum_{j\in J_n}n\cdot\int_{j/n}^{(j+1)/n}\phi\left(ny-\frac{\lfloor nz\rfloor}{n}-2\right)\mathrm{d}z\\
&\leq n\cdot\int_{-\infty}^\infty\phi\left(ny-\frac{\lfloor nz\rfloor}{n}-2\right)\mathrm{d}z.
\end{split}\]
Support considerations show that
\[\begin{split}
\int_{-\infty}^\infty\phi\left(ny-\frac{\lfloor nz\rfloor}{n}-2\right)\mathrm{d}z=\int_{ny-4}^{ny-2+1/n}\phi\left(ny-\frac{\lfloor nz\rfloor}{n}-2\right)\mathrm{d}z.
\end{split}\]
Since $|\lfloor nz\rfloor/n-z|\leq1/n$ for all $z$, using the $1$-Lispchitz continuity of $\phi$, we get
\[\begin{split}
\int_{ny-4}^{ny-2+1/n}\phi\left(ny-\frac{\lfloor nz\rfloor}{n}-2\right)\mathrm{d}z&\leq\int_{ny-4}^{ny-2+1/n}\left(\phi(ny-z-2)+\frac{1}{n}\right)\mathrm{d}z\\
&=\int_{-1/n}^2\phi(t)\mathrm{d}t+\frac{2+1/n}{n}\leq1+\frac{3}{n}.
\end{split}\]
Thus, we obtain
\[\phi\left(ny-\frac{j}{n}-2\right)\leq n\cdot\left(1+\frac{3}{n}\right)\quad\text{uniformly in $y\in[0,1]$.}\]
Plugging this into \eqref{eq:boundoverinti}, we end up with 
\[\begin{split}
g(x)&\leq\frac{1}{1-3\beta}\cdot\frac{f(x)}{1-\beta}\cdot\frac{n}{\#J_n}\cdot n\cdot\left(1+\frac{3}{n}\right)\\
&=\frac{1}{1-3\beta}\cdot\frac{1}{1-\beta}\cdot\frac{n^2}{n^2-4n+1}\cdot\left(1+\frac{3}{n}\right)\cdot f(x)
\end{split}\]
for all $x\in[d_i,d_i+\varepsilon]$.
Since this multiplicative constant depends onlly on $\beta$ and $n$, and goes to $1$ as $\beta\to0$ and $n\to\infty$, the proof is complete.
\end{proof}

Finally, we show how Proposition \ref{prop:?} implies that every distribution with a suitable density is achieved by some \textbf{random} compact measured metric space (Corollary \ref{cor:density}).
Although we will not need sophisticated Gromov--Hausdorff--Prokhorov theory, let us take a brief paragraph to set the scene more rigorously.
Let $\mathbb{M}$ be the space of compact measured metric spaces $(S,d,\mu)$ (i.e, compact metric spaces $(S,d)$ endowed with a Borel probability measure $\mu$), seen up to measure-preserving isometry.
We denote by $[S,d,\mu]$ the equivalence class of $(S,d,\mu)$.
The Gromov--Hausdorff--Prokhorov metric $D:\mathbb{M}\times\mathbb{M}\rightarrow\mathbb{R}_+$ is defined by
\[D([S_1,d_1,\mu_1],[S_2,d_2,\mu_2])=\inf_{\substack{\phi_1:S_1\rightarrow S\\\phi_2:S_2\rightarrow S}}d_\mathrm{H}(\phi_1(S_1),\phi_2(S_2))\vee d_\mathrm{P}(\phi_1^*\mu_1,\phi_2^*\mu_2)\]
for all $[S_1,d_1,\mu_1],[S_2,d_2,\mu_2]\in\mathbb{M}$, where the infimum is over all isometric embeddings ${\phi_1:S_1\rightarrow S}$ and ${\phi_2:S_2\rightarrow S}$ into some common metric space $(S,d)$, and where $d_\mathrm{H}$ denotes the Hausdorff distance between non-empty compact subsets of $S$, and $d_\mathrm{P}$ the Prokhorov distance between Borel probability measures on $S$.
This makes $(\mathbb{M},D)$ into a separable and complete metric space, that we equip with its Borel $\sigma$-algebra.
Then, the fact that the mapping ${\kappa:\mathbb{M}\times\mathcal{B}(\mathbb{R}_+)\rightarrow[0,1]}$ defined by
\[\kappa([S,d,\mu],B)=\int_S\int_S\mathbf{1}(d(s_1,s_2)\in B)\mathrm{d}\mu(s_2)\mathrm{d}\mu(s_1)
\quad\text{for all $[S,d,\mu]\in\mathbb{M}$ and $B\in\mathcal{B}(\mathbb{R}_+)$}\]
is a Markov kernel allows to give a rigorous definition to sampling a random compact measured metric space $[S,d,\mu]$, and then conditionally on $[S,d,\mu]$, sampling a random variable distributed as $d(\xi_1,\xi_2)$, where $\xi_1$ and $\xi_2$ are independent random variables with distribution $\mu$ conditionally on $[S,d,\mu]$.

\smallskip

Without further ado, let us present the proof of Corollary \ref{cor:density}.

\begin{proof}[Proof of Corollary \ref{cor:density}]
Using Proposition \ref{prop:?} iteratively, we can write
\begin{equation}\label{eq:mixture}
f=\sum_{n\geq1}2^{-n}\cdot g_n\quad\text{almost everywhere,}
\end{equation}
where for each $n\in\mathbb{N}^*$, the function $f_n:\mathbb{R}_+\rightarrow\mathbb{R}_+$ is the probability density function of a distribution which is achieved by some compact measured metric space $(S_n,d_n,\mu_n)$.
Then, to achieve the distribution with density $f$, it suffices to take a mixture of the $(S_n,d_n,\mu_n)$, with weights $2^{-n}$.
To complete the proof, let us justify \eqref{eq:mixture}.
Let $n\in\mathbb{N}$, and assume by induction that there are continuous probability density functions ${g_1,\ldots,g_n}$ and $f_n$ on $\mathbb{R}_+$ such that
\[f=\sum_{k=1}^n2^{-k}\cdot g_k+2^{-n}\cdot f_n,\]
where for each $k\in\llbracket1,n\rrbracket$, the distribution with density $g_k$ is achieved by some compact measured metric space $(S_k,d_k,\mu_k)$, and where there exists $\eta>0$ such that $f_n(x)>0$ for all $x\in{]0,\eta]}$.
(Initially, consider the trivial decomposition ${f=f_0}$.)
Then, by Proposition \ref{prop:?}, there exists a compact measured metric space $(S_{n+1},d_{n+1},\mu_{n+1})$ such that the distribution $\theta$ arising from $(S_{n+1},d_{n+1},\mu_{n+1})$ has a continuous density $g_{n+1}$ on $\mathbb{R}_+$, where $g_{n+1}\leq 3/2\cdot f_n$.
Next, we can write
\[f_n=\frac{1}{2}\cdot g_{n+1}+\frac{1}{2}\cdot f_{n+1},\]
where
\[f_{n+1}=\frac{f_n-g_{n+1}/2}{1-1/2}.\]
By construction, we have
\[f=\sum_{k=1}^{n+1}2^{-k}\cdot g_k+2^{-(n+1)}\cdot f_{n+1},\]
and since
\[f_{n+1}\geq\frac{f_n/4}{1-1/2}=\frac{f_n}{2},\]
we have $f_{n+1}(x)>0$ for all $x\in{]0,\eta]}$.
By induction, we obtain in this way a sequence of continuous probability density functions $g_1,g_2,\ldots$ on $\mathbb{R}_+$ such that for each $k\in\mathbb{N}^*$, the distribution with density $g_k$ is achieved by some compact measured metric space $(S_k,d_k,\mu_k)$, and where
\[f\geq\sum_{k\geq1}2^{-k}\cdot g_k.\]
As both terms integrate to $1$, they must be equal almost everywhere, which yields \eqref{eq:mixture}.
\end{proof}

%
%
%
%
%
%
%
%
%
%
%
%
%
%
%
%
%
%

\paragraph{Acknowledgements.}
The counter-example in Proposition \ref{prop:mathoverflow} was outlined by ``Mike" in response to our question posted on math overflow\footnote{\href{https://mathoverflow.net/questions/428539/distributions-of-distance-between-two-random-points-in-hilbert-space}{https://mathoverflow.net/questions/428539/distributions-of-distance-between-two-random-points-in-hilbert-space}}, and after comments by Iosif Pinelis, the present version was given on math overflow by Yuval Peres.
The second and third author were supported by the ERC Consolidator Grant ``SuPerGRandMa'' 101087572.


\begin{thebibliography}{1}

\bibitem{PE1}
David~J. Aldous.
\newblock Representations for partially exchangeable arrays of random
  variables.
\newblock {\em J. Multivariate Anal.}, 11(4):581--598, 1981.

\bibitem{austin}
Tim Austin.
\newblock Exchangeable random arrays, 2013.
\newblock Lecture notes, available at
  https://www.math.ucla.edu/$\sim$tim/ExchnotesforIISc.pdf.
  
 \bibitem{gaczkowskigorka}
M. Gaczkowski and P. Górka.
\newblock Harmonic Functions on Metric Measure Spaces:
Convergence and Compactness.
\newblock \emph{Potential Anal.}, 31:203–214, 2009.

\bibitem{gromov}
M. Gromov.
\newblock \emph{Metric structures for Riemannian and non-Riemannian spaces}.
\newblock Birkhäuser, 2001.

\bibitem{gusfield}
Dan Gusfield.
\newblock {\em Algorithms on strings, trees, and sequences}.
\newblock Cambridge University Press, Cambridge, 1997.
\newblock Computer science and computational biology.

\bibitem{PE2}
D.~N. Hoover.
\newblock Row-column exchangeability and a generalized model for probability.
\newblock In {\em Exchangeability in probability and statistics ({R}ome,
  1981)}, pages 281--291. North-Holland, Amsterdam-New York, 1982.

\bibitem{PE3}
Olav Kallenberg.
\newblock {\em Probabilistic symmetries and invariance principles}.
\newblock Probability and its Applications (New York). Springer, New York,
  2005.

\bibitem{vershik2}
A.~M. Vershik.
\newblock The universal {U}ryson space, {G}romov's metric triples, and random
  metrics on the series of natural numbers.
\newblock {\em Uspekhi Mat. Nauk}, 53(5(323)):57--64, 1998.

\bibitem{vershik}
A.~M. Vershik.
\newblock Random and universal metric spaces.
\newblock In {\em Fundamental mathematics today ({R}ussian)}, pages 54--88.
  Nezavis. Mosk. Univ., Moscow, 2003.



\end{thebibliography}
   
 \end{document}